\documentclass[12pt]{amsart}
\usepackage{amsmath, amssymb}
\usepackage{amsthm}
\usepackage{lastpage}
\usepackage{fancyhdr}
\usepackage{graphicx}
\usepackage{bbm}
\usepackage{empheq}
\usepackage{geometry}
\usepackage[matrix, arrow, curve]{xy}
\usepackage{float, placeins}
\usepackage{epsfig, graphics, psfrag, overpic, color, colonequals}
\usepackage{hyperref}
\usepackage[capitalise]{cleveref}

\makeatletter
\@namedef{subjclassname@2010}{%
  \textup{2010} Mathematics Subject Classification}
\makeatother

\numberwithin{equation}{section}
\setcounter{tocdepth}{1}

\newcommand*{\longhookrightarrow}{\ensuremath{\lhook\joinrel\relbar\joinrel\rightarrow}}

\DeclareMathOperator{\diam}{diam}

\newcommand{\bbN}{\mathbbm{N}}

\newcommand{\mcD}{\mathcal{D}}

\newcommand{\mcL}{\mathcal{L}}

\renewcommand{\phi}{\varphi}

\numberwithin{equation}{section}
\newtheorem{thm}[equation]{Theorem}
\newtheorem{prop}[equation]{Proposition}
\newtheorem{cor}[equation]{Corollary}
\newtheorem{lemma}[equation]{Lemma}

\newtheorem*{thm*}{Theorem}

\newtheorem*{cor*}{Corollary}
\newtheorem*{prop*}{Proposition}
\newtheorem*{lemma*}{Lemma}
\newtheorem*{mconj*}{Meta-Conjecture}
\newtheorem*{conj*}{Conjecture}
\newtheorem{cor+}{Corollary}
\newtheorem*{claim}{Claim}
\newtheorem*{challenge}{Challenge}

\theoremstyle{definition}

\newtheorem{example}[equation]{Example}
\newtheorem{defi}[equation]{Definition}
\newtheorem{rem}[equation]{Remark}

\newtheorem*{bsp*}{Example}
\newtheorem*{example*}{Example}
\newtheorem*{defi*}{Definition}
\newtheorem*{rem*}{Remark}
\newtheorem*{nota*}{Notation}

 \newtheoremstyle{TheoremNum}
        {}{}              
        {\itshape}                      
        {}                              
        {\bfseries}                     
        {.}                             
        { }                             
        {\thmname{#1}\thmnote{ \bfseries #3}}
\theoremstyle{TheoremNum}
\newtheorem{propn}{Proposition}
\newtheorem{corn}{Corollary}
\newtheorem{thmn}{Theorem}

\newcommand{\RR}{{\mathbb{R}}}
\newcommand{\ZZ}{{\mathbb{Z}}}
\newcommand{\N}{{\mathbb{N}}}

\newcommand{\QQ}{{\mathbb{Q}}}

\def\Z{\ZZ}
\def\Q{\QQ}
\def\R{\RR}

\def\ol{\overline}
\def\wt{\widetilde}
\def\ll{\langle}
\def\rr{\rangle}
\DeclareMathOperator{\Int}{int}

\def\sm{\setminus}
\newcommand{\eps}{\varepsilon}
\DeclareMathOperator{\Wh}{Wh}
\DeclareMathOperator{\cl}{cl}

%

\begin{document}

\baselineskip=17pt

\title{Shrinking of toroidal decomposition spaces}

\author[D. Kasprowski]{Daniel Kasprowski}
\address{Westf\"{a}lische Universit\"{a}t M\"{u}nster\\
Einsteinstrasse 62 \\
48149 M\"{u}nster \\
Germany
}
\email{daniel.kasprowski@uni-muenster.de}

\author[M. Powell]{Mark Powell}
\address{
  Department of Mathematics\\
  Indiana University \\
  Bloomington, IN 47405\\
  USA
  }
\email{macp@indiana.edu}

\address{
  Max Planck Institute for Mathematics\\
  Vivatsgasse 7\\
  53111 Bonn\\
  Germany
}

\date{}

\begin{abstract}
Given a sequence of oriented links $L^1,L^2,L^3,\dots$ each of which has a distinguished, unknotted component, there is a decomposition space~$\mathcal{D}$ of~$S^3$ naturally associated to it, which is constructed as the components of the intersection of an infinite sequence of nested solid tori.  The Bing and Whitehead continua are simple, well known examples.  We give a necessary and sufficient criterion to determine whether $\mcD$ is shrinkable, generalising previous work of F.~Ancel and M.~Starbird and others. This criterion can effectively determine, in many cases, whether the quotient map $S^3 \to S^3 / \mathcal{D}$ can be approximated by homeomorphisms.
\end{abstract}

\subjclass[2010]{Primary 57M25, 57M30, 57N12}

\keywords{Decomposition space, Bing shrinking, Milnor invariants}

\maketitle

\section{Introduction}

In this paper $\N := \N_{>0}$, the set of positive integers, and $\N_0 := \N \cup \{0\}$.

A \emph{decomposition} of a manifold $X$ is a collection $\mathcal{D} = \{\Delta_i\}$ of pairwise disjoint closed subsets of $X$ with $\bigcup\,\mathcal{D} = X$.  A decomposition of a compact manifold is said to be \emph{shrinkable}, in the sense of R.~H.~Bing, if the associated quotient map which identifies the elements of $\mathcal{D}$ to points can be approximated by homeomorphisms, so that there is a sequence of homeomorphisms $\{f_j \colon X \to X/\mathcal{D}\}$ converging to the quotient map $f\colon X \to X/\mathcal{D}$. The adjective shrinkable arises from the method typically used to construct such a sequence.  One constructs homeomorphisms $h_j \colon X \to X$ such that the subsets $h_j(\Delta_i)$ become progressively smaller; that is, they shrink.  See \cref{S:notations} for more details.

In particular, for a shrinkable decomposition, the spaces $X$ and $X/\mathcal{D}$ are seen to be homeomorphic.  When specifying a decomposition $\mathcal{D}$, following custom we only specify the subsets which are not singletons.

Suppose we are given a sequence of oriented links $\mcL=L^1,L^2,L^3,\dots$ in $S^3$, where $L^i=L_0^i \, \sqcup\, L_1^i \, \sqcup \dots \sqcup \, L_{n_i}^i$ is an oriented $(n_i+1)$-component link with a specified component $L_0^i$ unknotted. The aim of this paper is to give general conditions to decide whether the following decomposition $\mcD$ of $S^3$ associated to $\mcL$ shrinks.  Each link $L^i$ determines a link $L^i_1 \, \sqcup \dots \sqcup \, L^i_{n_i}$ in $S^3 \setminus \nu L_0^i$, the complement of an open regular neighbourhood of $L_0^i$. The orientation of $L^i_0$ and the embedding in $S^3$ determine a canonical diffeomorphism (up to ambient isotopy) $S^3 \setminus \nu L_0^i\xrightarrow{\cong} S^1\times D^2$, since they determine an orientation of the $S^1$ factor and a zero framing. A closed regular neighbourhood $\cl(\nu (L^i\setminus L^i_0))$ of $L^i \setminus L^i_0$ is in the same way canonically diffeomorphic to a disjoint union of solid tori.  Therefore, every link $L^i$ determines an embedding $\hat L^i:\bigsqcup_{k=1}^{n_i}\, S^1\times D^2\hookrightarrow S^1\times D^2$.  These embeddings determine a sequence $T_0 \supset T_1 \supset T_2 \supset \ldots$ where $T_0\subset S^3$ is a single unknotted solid torus and each subsequent term $T_s = \bigcup_{I_s} S^1\times D^2$ is a disjoint union of solid tori, where $I_s := \prod_{i=1}^s n_i$ and $I_0 := 1$. For $s\in\bbN$, the subset $T_s \subset T_{s-1}$ is obtained as
\[T_s=\bigcup_{I_{s-1}}\bigsqcup_{k=1}^{n_s}S^1\times D^2\stackrel{\bigcup\hat L^s}{\longhookrightarrow}\bigcup_{I_{s-1}} S^1\times D^2=T_{s-1}.\]
We define the decomposition $\mcD$ as the connected components of their intersection $\bigcap_{s\in\bbN}T_s$. Such decomposition spaces are called \emph{toroidal}. As above the notions of a link in $S^3$ with a distinguished unknotted component $L_0$ and a link in a solid torus are considered as interchangeable in this paper via $S^3\setminus \nu L_0 \cong S^1\times D^2$.

We note that our results also apply if $S^3$ is replaced by a submanifold of $S^3$ which contains $T_0$.  Isotopies of the defining links can change the actual decomposition, although the homeomorphism type of the quotient $S^3/\mathcal{D}$ remains unaltered.

A good example to keep in mind, and indeed one of the motivating examples which this work aims to generalise, is when $L^i$ is either the Whitehead link or the Borromean rings.  Then in a solid torus we have, respectively, either a Whitehead double of the unknot $S^1 \times \{0\} \subset S^1 \times D^2$ or a Bing double of this unknot.  We say that a decomposition is \emph{pure} if the same link is used at every step of its defining sequence i.e.\ $L^i$ is a fixed link $L$ for all~$i$.  The pure decompositions arising from the Borromean rings and the Whitehead link are referred to as the Bing and Whitehead continua respectively.

The Bing continua are shrinkable~\cite{Bing-shrinking-bing-doubles}; it turns out that this implies that the double of the complement in $S^3$ of the Alexander horned ball (this complement has also been called the Alexander gored ball) is homeomorphic to $S^3$.

The Whitehead continuum $\Wh$ is not shrinkable, although interestingly the decomposition \mbox{$\{\Wh \times \{r\}\,|\, r \in \R\}$} is shrinkable in $S^3 \times \R$~\cite{Andrews-Rubin-65}.

A decomposition which is defined using a combination of these two links is known as a mixed Bing-Whitehead decomposition. These decomposition spaces describe the frontiers of Freedman-Quinn handles, which are constructed in the proof of the 4-dimensional disc embedding theorem in~\cite{FQ}.  For a concise discussion of the relationship between mixed Bing-Whitehead decomposition spaces and convergent grope-disc towers see the introduction of~\cite{Ancel-Starbird-1989}, where F.~Ancel and M.~Starbird give a precise answer to the shrinking question for these spaces.

Now we turn to describing our results, which generalise the results of Ancel and Starbird to decompositions defined using arbitrary links.
To every link $L$ in a solid torus we will associate, in \cref{defn:disc-rep-fns}, a function $D_L\colon\bbN_0\to\bbN_0$ called the \emph{disc replicating function} of $L$.  These functions provide a way to decide whether a decomposition obtained from a sequence of such links is shrinkable.  Now we state our main theorem.

\begin{thmn}[\ref{thmA}]
A decomposition $\mcD$ of $S^3$ obtained from a sequence of links $\{L^i\}_{i \in \bbN}$ is shrinkable if and only if
\[\lim_{p\to\infty} (D_{L^{m+p}}\circ\ldots\circ D_{L^m})(k)=0\]
for all $k,m\in\bbN$.
\end{thmn}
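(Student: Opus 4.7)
The plan is to prove both directions of the biconditional by leveraging the intended interpretation of $D_L(k)$: the minimal number of meridian discs of the inner solid tori (pattern $L$) that are needed to algebraically cap off $k$ meridian discs of the ambient solid torus, as dictated by Milnor-type invariants of $L$. Under this reading, the composition $D_{L^{m+p}}\circ\cdots\circ D_{L^m}$ records the persistent disc obligation that propagates inward from level $m-1$ to level $m+p$, and the vanishing in the statement asserts that these obligations become trivial arbitrarily deep in the toroidal filtration.

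For sufficiency, I would follow the template set out by Ancel--Starbird for mixed Bing--Whitehead decompositions. Fix $\varepsilon>0$ and a level $m$; each component of $T_{m-1}$ is an unknotted solid torus, and I want to construct a homeomorphism $h\colon S^3\to S^3$, supported in $T_{m-1}$, such that every component of $T_s$ for some large $s = m+p$ has diameter less than $\varepsilon$. Starting from the $k$ meridian discs of $T_{m-1}$ available in a given component, the hypothesis supplies a $p$ with $(D_{L^{m+p}}\circ\cdots\circ D_{L^m})(k)=0$. Interpreting $D_L$ as above, at that level one can honestly cap off all meridian discs with embedded discs in $T_{m+p-1}\setminus T_{m+p}$, which allows us to radially contract the components of $T_{m+p}$ to small diameter by an isotopy of $T_{m-1}$. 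Iterating this procedure across all levels gives a sequence $\{h_j\}$ whose composites constitute Bing shrinking homeomorphisms converging to the quotient map.

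For necessity, I would argue contrapositively. Suppose the limit fails for some $k_0, m_0$, so a fixed bound $(D_{L^{m_0+p}}\circ\cdots\circ D_{L^{m_0}})(k_0)\geq 1$ holds for all $p$. The content of $D_L$ is that any immersion of $k_0$ discs in $T_{m_0-1}$ with boundary on $\partial T_{m_0-1}$ must meet some meridian disc at a deep level with a nonzero algebraic count detected by a Milnor invariant. Therefore no finite-stage isotopy of $S^3$ can compress the relevant component of $\bigcap_s T_s$ into a ball disjoint from a chosen meridian disc, because such an isotopy would promote the persistent algebraic count into a geometric intersection count of zero. This obstructs any Bing shrink: a shrinking sequence would, past some stage, allow us to enclose the offending component in a small ball disjoint from the meridian, a contradiction with the Milnor-invariant lower bound.

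The main obstacle I anticipate is the quantitative side of sufficiency: translating the combinatorial cap-off guaranteed by $D_{L^{m+p}}\circ\cdots\circ D_{L^m}(k)=0$ into an ambient isotopy of $T_{m-1}$ whose support has controlled displacement, so that the shrinking homeomorphisms assemble into a convergent sequence rather than merely an eventually-small family of local moves. One must coordinate the cap-off discs across the many components of $T_{m+p}$, ensure that the isotopies nest coherently through all outer levels, and verify the standard Bing shrinking criterion (small decomposition elements plus small point-displacement) uniformly. Handling this coordination, rather than either the algebraic obstruction theory in necessity or the single-stage capping in sufficiency, is the technical heart of the argument.
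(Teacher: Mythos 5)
Your proposal rests on a reading of $D_L(k)$ that is not the one the theorem uses, and this undermines both directions. In the paper, $D_L(k)$ is defined purely geometrically via $k$-interlacings of meridional discs (Definitions~\ref{defn:upper-interlacing-no} and~\ref{defn:lower-interlacing-number}): it is the largest $j$ such that \emph{every} ambient isotopy of $L$ leaves some component whose intersections with the $2k$ alternating $A$ and $B$ discs still form a $j$-interlacing. Milnor invariants enter only later (Section~\ref{S:milnor}) to produce computable \emph{lower} bounds $f_L \leq D_L$; they are not the definition. Your ``minimal number of discs needed to algebraically cap off'' interpretation therefore makes your necessity argument circular in the wrong direction: from $(D_{L^{m_0+p}}\circ\cdots\circ D_{L^{m_0}})(k_0)\geq 1$ you cannot conclude that any Milnor invariant is nonzero, since $D_L$ may exceed every Milnor-detected bound. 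The actual non-shrinking argument (Lemma~\ref{lemma:intersects-A-and-B-implies-no-BSC}) never mentions Milnor invariants: one assumes a Bing-shrinking homeomorphism $h_\eps$ exists, notes it must be near the identity outside a deep stage, places a $k$-interlacing with $d_{S^3}(A,B)>\eps$ in $h_\eps(T_{m-1})_r$, pulls it back by $h_\eps^{-1}$, and uses the positivity of the composed $D$'s to find a component of every $T_s$ meeting both $A$ and $B$; the infinite intersection then contains a decomposition element of diameter at least $\eps$.

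On the sufficiency side, there is no ``capping off by embedded discs followed by radial contraction'' anywhere in the argument, and it is not clear such discs exist. What actually happens is: diameter is measured longitudinally in $T_0$, one chooses a meridional $k$-interlacing whose complementary components are short, and the defining property of $U_L$ (which equals $D_L$ by Lemma~\ref{lemma:interlacing-functions-coincide} --- an equality your proposal never addresses but which is essential, since the two directions of the theorem use the two different definitions) supplies, stage by stage, ambient isotopies after which each component of $T_{s+p}$ inherits at most a $b_p$-interlacing. Once $b_p=0$, each component of that stage meets at most one of $A$ and $B$ and is therefore automatically short; no contraction is performed. You also omit the necessary preliminary step of perturbing $A\cup B$ to meet every $\partial T_s$ transversely and only in meridians (using Bing's classification of essential intersection curves), without which the interlacing bookkeeping does not get started. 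The ``coordination of isotopies across components'' that you flag as the technical heart is comparatively routine here because the isotopies are supported in disjoint solid tori inside $T_s$ and hence satisfy condition~(\ref{item:BSC-1}) of Theorem~\ref{theorem:bing-shrinking-criterion2} for free; the real technical heart is the interlacing formalism itself, which your proposal replaces with an inequivalent algebraic surrogate.
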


We note that in fact due to~\cite[Theorems~3~and~9]{Armentrout-66}, for the decompositions we consider
the quotient $S^3/\mathcal{D}$ is homeomorphic to $S^3$ if and only if $\mcD$ is shrinkable.  For details see \cref{Remark:not-homeo-to-S3}.

The proof of \cref{thmA} is given in \cref{S:proof}.  As mentioned above, the main idea is to associate, to each link $L$ in a solid torus of the defining sequence, a function $D_L \colon \N_0 \to \N_0$ which we call the disc replicating function of $L$.  To define $D_L$ we need the notion of a $k$-interlacing of a solid torus (Definition~\ref{defn:interlacing-discs}); roughly speaking this is a collection of $2k$ meridional discs labelled alternately $A$ and $B$.  Given a $k$-interlacing the defining property of the function $D_L$ is that it gives the maximal integer $D_L(k)$ for which, after any ambient isotopy of $L$ inside the solid torus, there exists a component $L_j$ of $L$ such that $\cl(\nu L_j)$ has at least a $D_L(k)$-interlacing arising from a subset of its intersections with the $A$ and $B$ discs.  Equivalently, these functions can be defined by the property that with input $k$ they give the minimal nonnegative integer such that there exists an ambient isotopy of $L$ so that all components have at most a $D_L(k)$-interlacing.  Intersections with the $A$ and $B$ discs are then used to control the size of decomposition elements, and thus to show that the Bing shrinking criterion (Theorem~\ref{theorem:bing-shrinking-criterion2}) is either satisfied, or not, as appropriate.

The initial reason for thinking about this was to try to understand and provide some context for the arguments of Bing~\cite{Bing-shrinking-bing-doubles, Bing-62-non-shrinking} and Ancel and Starbird~\cite{Ancel-Starbird-1989} that describe when certain decompositions do and do not shrink; see also R.~Daverman~\cite[Section~9]{Daverman-86}. These arguments were first introduced to the authors in lectures of M.~Freedman on the 4-dimensional disc embedding theorem for the semester on 4-manifolds and their combinatorial invariants hosted by the Max Planck Institute for Mathematics in Bonn.

We reformulate and generalise results of Sher \cite[Theorem~4]{Sher-67} and S.~Armentrout \cite[Theorem~1]{Armentrout-70} on this topic, and as mentioned above we generalise the formula of Ancel and Starbird~\cite{Ancel-Starbird-1989} (also proved later by D.~Wright~\cite{Wright-Bing-Whitehead-1989}) that describes precisely which mixed Bing-Whitehead decompositions shrink.  We give new examples of decompositions for which we can determine whether they shrink.  As far as the authors are aware, our conditions supersede all previously known results on the shrinking or nonshrinking of these toroidal decompositions.

While upper bounds on $D_L$ can be found easily by repositioning the link, in \cref{S:milnor} we will show how to compute lower bounds for $D_L$ using Milnor invariants.  With these methods we can determine the function $D_L$ for every $(n,m)$-link $L$.  An $(n,m)$-link is formed from a meridian of a solid torus and a chain of $n$ unknots inside this solid torus, each of which links the previous and the next with linking number $1$, with the last also linking the first, such that the whole chain has winding number $m$ around the solid torus, and there is no additional entangling of the links in the chain.  Figure~\ref{figure:nmlink} shows a $(4,3)$-link.  The $(n,m)$-links give a nice and large class of examples which we will investigate thoroughly.  Note that the Bing link is a $(2,1)$-link and the Whitehead link is a $(1,1)$-link.

\begin{figure}[h]
\includegraphics[width=6cm]{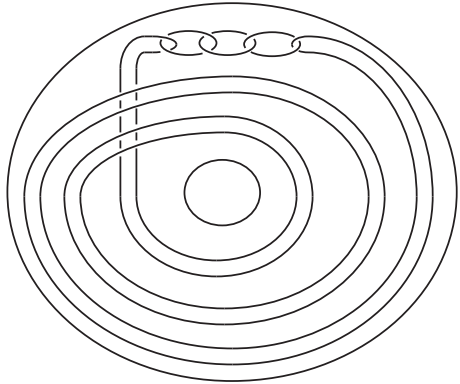}
\caption{A $(4,3)$-link with the meridian of the solid torus omitted.}
\label{figure:nmlink}
\end{figure}

\begin{propn}[\ref{P:nmlink}]
Let $L$ be an $(n,m)$-link.  Then the disc replicating function $D_L$ is given by
\[D_L(k)=\max\{\left\lceil\tfrac{2mk}{n}\right\rceil-1,0\}.\]
\end{propn}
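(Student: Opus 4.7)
The formula $D_L(k)=\max\{\lceil 2mk/n\rceil-1,0\}$ splits into an upper bound, proved by an explicit isotopy of the $(n,m)$-link, and a matching lower bound, obtained from the Milnor-invariant machinery developed in Section~\ref{S:milnor}. I would prove each direction in turn.

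For the upper bound, fix a $k$-interlacing $A_1,B_1,\dots,A_k,B_k$ of the ambient solid torus $S^1\times D^2$, and isotope the $(n,m)$-link so that the $n$ components of the chain are distributed as evenly as possible around the longitudinal direction, with the link in minimal geometric position relative to the meridional discs. Since the whole chain has winding $m$, the union of meridional discs meets the chain in a controlled total number of points, and distributing these evenly among the $n$ components yields, by pigeonhole, at most $\lceil 2mk/n\rceil$ disc-intersections per component. A further small isotopy clusters the $A$-disc intersections contiguously along each component wherever the cyclic order permits, so that the resulting interlacing of each component is at most $\lceil 2mk/n\rceil-1$. This gives the upper bound; the boundary case $2mk<n$ (in particular $k=0$) is immediate.

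For the lower bound, apply the Milnor-invariant obstruction developed in Section~\ref{S:milnor} to the $(n,m)$-link. The key input is a computation of the relevant Milnor invariants of the $(n,m)$-link, which are nontrivial in a quantitatively controlled way reflecting the ratio $m/n$. The general Milnor lower-bound statement from that section converts this nonvanishing into the forced conclusion that, after any ambient isotopy of $L$ inside the solid torus, some component $L_j$ exhibits at least a $\bigl(\lceil 2mk/n\rceil-1\bigr)$-interlacing inside $\cl(\nu L_j)$. Combining with the upper bound completes the proof.

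The main obstacle, as with many sharp combinatorial identities of this type, is aligning the rounding behaviour of the two sides. Pigeonhole on the upper-bound side and the Milnor-invariant inequality on the lower-bound side each naturally produce bounds of the form $2mk/n$ up to additive constants, and showing that both land on exactly $\lceil 2mk/n\rceil-1$, in particular across all residues of $2mk\pmod n$, is the technically delicate point. This requires careful case analysis on the upper-bound isotopy construction and a sharp form of the Milnor inequality on the lower-bound side; the hardest direction is expected to be verifying the lower bound for residues where a naive Milnor estimate would undershoot by one.
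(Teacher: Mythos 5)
Your high-level shape (explicit isotopy for the upper bound, the Section~\ref{S:milnor} machinery for the lower bound) is the paper's, but the lower bound as you describe it has a genuine gap. The paper does \emph{not} extract the factor $2m/n$ from ``Milnor invariants of the $(n,m)$-link which are nontrivial in a quantitatively controlled way reflecting $m/n$.'' Applying \cref{theorem:disc-replicating-functions} to $L$ itself (degree $d=1$, no blow-downs, $n$ components) only yields $f_L^L(k)=\lceil 2k/n\rceil-1$, which undershoots by a factor of $m$; and for multi-indices of length greater than two the bound $f_L^J(k)=\lceil 2dk/(n+\ell)\rceil-1$ does not depend on the \emph{value} of the invariant at all, only on its nonvanishing. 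The essential step you are missing is the construction of the auxiliary link $J$: pass to the $m$-fold cover (so $d=m$), select one copy of the resulting $(n,1)$-chain, and blow down $n-2$ of its components to reach the Borromean rings (possibly with a clasp changed), which has $\overline{\mu}_{012}=\pm 1$; the denominator is then $n+\ell=2+(n-2)=n$, giving exactly $\lceil 2mk/n\rceil-1$. The case $n=1$ needs separate treatment via the Whitehead link and $\overline{\mu}_{0011}=\pm 1$. Once $J$ is chosen, there is no residue-by-residue delicacy on the lower-bound side: nonvanishing of the invariant forces every lifted meridional disc to meet $\widehat{J}\setminus\widehat{J}_0$, pigeonhole forces some component to meet $[km/n]_{1/2}$ consecutive pairs of discs, and \cref{lemma:consecutive-discs-ancel-starbird-technical} converts this into a $(\lceil 2mk/n\rceil-1)$-interlacing on the nose.

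On the upper bound your count also has a factor-of-two slip as written: a tube meeting $N$ meridional discs of the interlacing in $N$ meridional discs of itself has interlacing at most $\lfloor N/2\rfloor$, so ``$\lceil 2mk/n\rceil$ disc-intersections per component'' would give roughly $\lceil mk/n\rceil$, contradicting the lower bound. The correct picture is that each component runs through a consecutive block of $\lceil 2mk/n\rceil$ of the $2k$ ambient discs and, having winding number zero, returns through the same block, meeting each disc twice; the resulting palindromic $A/B$ pattern caps the interlacing at $\lceil 2mk/n\rceil-1$. The paper itself only gestures at this with Figure~\ref{figure:nmlink4}, so this part is a matter of precision rather than a wrong approach, but the lower bound needs the covering-and-blow-down construction to go through.
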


In particular the disc replicating function for the Bing link is $D_L(k) = k-1$, while for the Whitehead link it is $D_L(k)=2k-1$; these functions appear in~\cite{Ancel-Starbird-1989, Wright-Bing-Whitehead-1989}.

From this and \cref{thmA} we deduce the following.  Let $\mathcal{D}$ be a decomposition of $S^3$ arising from a sequence of links $L^1,L^2,L^3,\dots$ where $L^i$ is an $(n_i,m_i)$-link.

\begin{corn}[\ref{C:nmlinks}]
Define~$\tau_i:=n_{i}/2m_{i}$.
 \begin{enumerate}
\item \label{item:cor-5-point-2-item-1-intro} If $\sum_{j=1}^\infty\prod_{i=1}^j\tau_i$ converges, then the decomposition $\mcD$ does not shrink.
\item \label{item:cor-5-point-2-item-2-intro} If $\sum_{j=1}^\infty\tfrac{1}{n_j}\prod_{i=1}^j\tau_i$ diverges, then $\mcD$ does shrink.
\end{enumerate}
In particular we have:
\begin{enumerate}
\setcounter{enumi}{2}
\item \label{item:as-intro} If $\sup_{i\in\bbN} n_{i}<\infty$, then $\mcD$ shrinks if and only if $\sum_{j=1}^\infty\prod_{i=1}^j\tau_i$ diverges.
\item \label{item:cor-5-point-2-item-4-intro} If the sequence of links is periodic; that is if there exists $p\in\bbN$ with $L^i=L^{i+p}$ for all $i\in\bbN$, then $\mcD$ shrinks if and only if $\prod_{i=1}^p\tau_i\geq 1$.
\end{enumerate}
\end{corn}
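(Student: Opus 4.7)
The plan is to use \cref{thmA} together with the explicit formula $D_{L^i}(k) = \max\{\lceil k/\tau_i\rceil - 1, 0\}$ from \cref{P:nmlink} to reduce the shrinking question to the asymptotics of the integer iteration $a_0 = k$, $a_{j+1} = D_{L^{m+j}}(a_j)$: the decomposition $\mcD$ shrinks if and only if $a_j \to 0$ for every starting pair $(k,m)$. The hinge of the argument will be the two-sided estimate
\[
\frac{a_j}{\tau_{m+j}} - 1 \;\le\; a_{j+1} \;\le\; \frac{a_j}{\tau_{m+j}} - \frac{1}{n_{m+j}}\qquad (a_j\ge 1),
\]
whose upper inequality uses $\lceil a/b\rceil \leq (a + b - 1)/b$ for positive integers $a, b$. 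Setting $P_j := \prod_{i=0}^{j-1}\tau_{m+i}$, multiplying by $P_{j+1}$ and telescoping yield the central chain
\[
k - \sum_{l=1}^{J} P_l \;\le\; a_J P_J \;\le\; k - \sum_{l=1}^{J} \frac{P_l}{n_{m+l-1}},
\]
which drives both of the main implications.

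For (\ref{item:cor-5-point-2-item-1-intro}) I would fix $m = 1$ and choose any integer $k$ strictly larger than the convergent sum $\sum_{l\ge 1} P_l$; the left inequality then bounds $a_J P_J$ below by a positive constant, forcing $a_J \geq 1$ for every $J$ and violating the criterion of \cref{thmA}. For (\ref{item:cor-5-point-2-item-2-intro}), one first checks that the tail $\sum_{l\ge 1} P_l/n_{m+l-1}$ differs from the original divergent series only by a positive multiplicative constant (namely $1/\prod_{i=1}^{m-1}\tau_i$) and hence still diverges; the right inequality then cannot accommodate $a_J \geq 1$ indefinitely, and since $D_L(0) = 0$ the value $a_J = 0$ propagates once attained. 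Parts (\ref{item:as-intro}) and (\ref{item:cor-5-point-2-item-4-intro}) follow formally: a uniform bound $n_i \leq N$ pinches the two series of (\ref{item:cor-5-point-2-item-1-intro})--(\ref{item:cor-5-point-2-item-2-intro}) into the same convergence class, producing a sharp dichotomy; while periodicity reduces both sums to geometric-like sums with common ratio $\rho := \prod_{i=1}^p \tau_i$, convergent iff $\rho < 1$ and divergent iff $\rho \geq 1$.

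The step I expect to demand the most care is the sharpness of the upper bound: recording the correction as $-1/n_{m+j}$ rather than a cruder $O(1)$ term is precisely what allows (\ref{item:as-intro}) to close the gap between (\ref{item:cor-5-point-2-item-1-intro}) and (\ref{item:cor-5-point-2-item-2-intro}) under bounded $n_i$. Beyond this arithmetic subtlety and the routine bookkeeping around the $\max$ with zero in the definition of $D_L$ (needed to justify the telescoping only while $a_j \ge 1$), the argument reduces to elementary manipulation of the telescoped estimates.
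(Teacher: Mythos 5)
Your proposal is correct and takes essentially the same approach as the paper: the two-sided bound $\tau_i^{-1}k-1\le D_{L^i}(k)\le \tau_i^{-1}k-\tfrac{1}{n_i}$ is exactly the pair of affine comparison functions $g_i$ the authors use, and your telescoped inequality $k-\sum_l P_l\le a_JP_J\le k-\sum_l P_l/n_{m+l-1}$ is just their closed-form computation of the iterated composition written as a recursion. The deductions of (1)--(4), including the choice of $k_0$ exceeding the convergent sum and the reduction of the periodic case to a geometric series, match the paper's proof.
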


More background on $(n,m)$-links and the proof of \cref{P:nmlink} and \cref{C:nmlinks} is given in \cref{S:examples}. Furthermore we will use these criteria to show how the afore-mentioned results of Ancel-Starbird, Sher and Armentrout follow as corollaries of \cref{thmA} and \cref{P:nmlink}.

We finish the introduction by noting a possible extension of our work.

\begin{challenge}
  Extend our techniques to deal with decompositions which have defining sequences given by nested handlebodies. See~\cite{Bing-57-handlebody-decomps} for an example.
\end{challenge}

\textbf{Acknowledgements:}
This paper was written while the authors were visitors at the Max Planck Institute for Mathematics in Bonn during the $4$-manifolds semester.  We are grateful to the institute for its support and hospitality, and especially to the organisers of the semester Michael Freedman, Matthias Kreck and Peter Teichner.  We would also like to thank our fellow participants of the $4$-manifolds semester, and especially Stefan Behrens, Robert Edwards, Stefan Friedl, Fabian Hebestreit, Frank Quinn and Peter Teichner for helpful conversations and suggestions.  We also thank the referee for several helpful suggestions.

Finally, the first author is supported by SFB 878 Groups, Geometry and Actions, while the second author gratefully acknowledges an AMS-Simons travel grant which helped with his travel to Bonn.

\tableofcontents

\section{Background and definitions}
\label{S:notations}

\subsection{Shrinking a decomposition}
A \emph{decomposition} of a compact metric space $X$ is a collection $\mathcal{D} = \{\Delta_i\}$ of pairwise disjoint closed subsets of $X$ with $\bigcup\,\mathcal{D} = X$.  Following custom we abuse notation and refer to the decomposition as the elements of $\mathcal{D}$ which are not singletons, with the understanding that once the nontrivial decomposition elements have been specified the rest of the space is decomposed into singleton sets.  We are interested in the topology of the space $X/\mathcal{D}$ obtained by collapsing each $\Delta_i$ to a point.
We say that a decomposition $\mathcal{D}$ of a compact manifold $X$ is \emph{shrinkable} if the quotient map $q \colon X \to X/\mathcal{D}$ can be approximated by homeomorphisms.  That is, there exists a sequence of homeomorphisms which converges to $q$ in the supremum norm.  In particular this implies that $X$ is homeomorphic to $X/\mathcal{D}$.

Note that we may have to appeal to the Urysohn metrisation theorem in order to endow the quotient space with a metric.  We assume that decompositions are such that the quotient is Hausdorff, in order to apply the Urysohn metrisation theorem.  This follows if the decomposition is \emph{upper semi-continuous}~\cite[Pages~8-15]{Daverman-86}, which will always hold for the toroidal decompositions studied in this paper.

Approximating a map by  homeomorphisms is possible if and only if the \emph{Bing Shrinking Criterion} holds, which in R. Edwards' formulation \cite[Section~9]{Edwards-ICM-80} is as follows.

\begin{thm}[Bing Shrinking Criterion]\label{theorem:bing-shrinking-criterion2}
  A surjective map $f \colon X \to Y$ of compact metric spaces can be approximated by homeomorphisms if and only if for any $\eps > 0$, there exists a homeomorphism $h_{\eps} \colon X \to X$ such that the following two conditions are satisfied.
  \begin{enumerate}
    \item \label{item:BSC-1} The homeomorphism $h_{\eps}$ does not move points very far in the metric of $Y$:  $$d_Y(f(x),f \circ h_{\eps}(x)) < \eps$$ for all $x \in X$; and
    \item \label{item:BSC-2} The inverse image sets become sufficiently small under $h_{\eps}$: $$\diam_X (h_{\eps}(f^{-1}(y))) < \eps$$ for all $y \in Y$.
  \end{enumerate}
  \qed
\end{thm}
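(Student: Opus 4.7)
My plan is to prove the two directions separately, since they call for rather different techniques.

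For the $(\Leftarrow)$ direction, I would use uniform continuity of a nearby homeomorphism. Given $\eps > 0$, pick two homeomorphisms $g, g' \colon X \to Y$ that are both $\delta$-close to $f$ in the supremum metric, where $\delta > 0$ is to be specified. Since $Y$ is compact, $g^{-1}$ is uniformly continuous, so I can choose $\delta < \eps/2$ small enough that $g^{-1}$ sends every set of diameter at most $2\delta$ to a set of diameter less than $\eps$. Set $h_\eps := g^{-1} \circ g' \colon X \to X$, which is a self-homeomorphism. Condition (1) follows from the triangle inequality applied to the identity $g \circ h_\eps = g'$:
\[d_Y(f(x), f(h_\eps(x))) \leq d_Y(f(x), g'(x)) + d_Y(g(h_\eps(x)), f(h_\eps(x))) < 2\delta < \eps.\]
Condition (2) follows because $g'$ sends each fiber $f^{-1}(y)$ into the $\delta$-ball around $y$, so $h_\eps(f^{-1}(y)) \subseteq g^{-1}(B_Y(y, \delta))$ has diameter less than $\eps$.

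For the $(\Rightarrow)$ direction, I would construct approximating homeomorphisms inductively. Fix a positive sequence $(\eps_n)$ with $\sum_n \eps_n < \eps/2$. Let $H_0 := \id_X$. For $n \geq 1$, apply the Bing shrinking criterion to the push-forward map $f \circ H_{n-1}^{-1} \colon X \to Y$ at scale $\eps_n$ (its decomposition is $\{H_{n-1}(f^{-1}(y))\}$, and the criterion is inherited from $f$ via the self-homeomorphism $H_{n-1}$), producing $h_n \colon X \to X$, and set $H_n := h_n \circ H_{n-1}$. A rearrangement of condition (1) at step $n$ (substituting $x = h_n^{-1}(x')$) gives $d_Y(f \circ H_n^{-1}, f \circ H_{n-1}^{-1}) < \eps_n$, so by summability the sequence $f \circ H_n^{-1}$ is Cauchy in the supremum metric and converges uniformly to a continuous map $G \colon X \to Y$ with $d_Y(G, f) < \eps$. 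Condition (2) gives $\diam_X(H_n(f^{-1}(y))) < \eps_n$, so the fibers of $f \circ H_n^{-1}$ shrink to zero diameter.

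The main obstacle will be showing $G$ is injective; surjectivity is automatic in geometric settings such as closed manifolds of equal dimension via invariance of domain, and in any case can be recovered from the fact that $G(X)$ is compact and arbitrarily close to $Y$. For distinct $x_1 \neq x_2$ in $X$ with $d_X(x_1, x_2) \geq \eta$, condition (2) puts them in different fibers of $f \circ H_n^{-1}$ as soon as $\eps_n < \eta$, giving $y_1^{(n)} := f \circ H_n^{-1}(x_1) \neq f \circ H_n^{-1}(x_2) =: y_2^{(n)}$. The crux is that these pointwise separations must survive the limit. My approach is to choose the $\eps_n$ adaptively: once $\eps_n < 1/k$, define $\alpha_n^{(k)} := \min \{d_Y(y_1^{(n)}, y_2^{(n)}) : d_X(x_1, x_2) \geq 1/k\}$, which is positive by compactness and continuity. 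Then, before proceeding to step $n+1$, shrink all subsequent $\eps_m$ so that $\sum_{m > n} \eps_m < \alpha_n^{(k)}/4$ for every $k \leq n$ (possible by a diagonal argument while keeping $(\eps_m)$ summable). The triangle inequality then preserves $\alpha_m^{(k)} \geq \alpha_n^{(k)}/2$ for all $m > n$, so $d_Y(G(x_1), G(x_2)) > 0$ whenever $x_1 \neq x_2$, and $G$ is a homeomorphism approximating $f$ within $\eps$.
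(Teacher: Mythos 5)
Your proposal is essentially correct, but it takes a genuinely different route from the paper: the paper does not prove this theorem at all, deferring the detailed argument to Daverman (Section 5) and offering only a heuristic for the hard direction, in which one assumes a \emph{convergent} sequence of shrinking homeomorphisms $h_{1/n}\to h_\infty$ exists, observes that $h_\infty$ and $f$ have the same point inverses, factors $f=f'\circ h_\infty$ with $f'$ a homeomorphism, and approximates $f$ by $f'\circ h_{1/n}$. You instead build the approximation directly: you compose the shrinking homeomorphisms into $H_n=h_n\circ\cdots\circ h_1$ and take the uniform limit of the push-forwards $f\circ H_n^{-1}$ in $\Map(X,Y)$ rather than a limit in $\Map(X,X)$. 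This is the classical self-contained argument (close in spirit to Ferry's notes, and an alternative to Edwards' Baire-category proof), and its payoff is that it makes explicit the one genuinely delicate point, namely why injectivity survives the limit; your adaptive choice of the $\eps_n$ via the compactness constants $\alpha_n^{(k)}$ handles this correctly, and is exactly the ingredient the paper's heuristic sweeps into the phrase ``which converges in the supremum norm.'' Your verification of surjectivity of the limit and the observation that the criterion is inherited by $f\circ H_{n-1}^{-1}$ (which quietly uses uniform continuity of $H_{n-1}$ to transfer the diameter bound) are both fine.

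Two small repairs are needed. First, your direction labels are swapped relative to the statement: the argument you call $(\Leftarrow)$ assumes $f$ is already approximable by homeomorphisms and derives the criterion, which is the $(\Rightarrow)$ implication as stated, and vice versa. Second, in that easy direction there is a circularity as written: you choose $g$ to be $\delta$-close to $f$, but then define $\delta$ using the modulus of uniform continuity of $g^{-1}$. The fix is to reorder the quantifiers: first choose any homeomorphism $g$ with $d(g,f)<\eps/2$, then extract $\delta$ from uniform continuity of $g^{-1}$, and only then choose $g'$ with $d(g',f)<\min\{\delta,\eps/2\}$; the displayed estimates go through verbatim since the bound on $d_Y(g(h_\eps(x)),f(h_\eps(x)))$ needs only $\eps/2$-closeness of $g$, while condition (2) needs only $\delta$-closeness of $g'$.
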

A detailed proof is given in~\cite[Section~5]{Daverman-86}.  We provide a brief heuristic.
If we can find a sequence~$h_{1/n}$ which converges in the supremum norm (see~\cite[Pages~5~-~7]{ferry-gt-notes} for how to construct such a sequence) then we can see that $f$ can be approximated by homeomorphisms as follows. Let $h_\infty$ be the limit of the sequence of functions $h_{1/n}$ in the supremum norm.  Then $f$ factors through $h_\infty$, as in the diagram below.
\[\xymatrix{X \ar[rr]^{f} \ar[dr]_{h_{\infty}} & & Y \\ & X \ar @{-->}[ur]_{f'} &}\]
Here~$f'$ is defined by $f'(x):=f(h_\infty^{-1}(x))$.  This makes sense because $h_\infty$ and $f$ have the same point inverses by property (\ref{item:BSC-2}), i.e.\ $h_{\infty}(z) = h_{\infty}(z')$ if and only if $f(z) = f(z')$.  The map~$f'$ is a bijection.  It is continuous, by the following argument.  For a closed subset $C\subseteq Y$ we have $(f')^{-1}(C)=h_\infty(f^{-1}(C))$ and we claim this is closed:~$f$ is continuous, so $f^{-1}(C)$ is closed and therefore compact since~$X$ is compact.  Then~$h_\infty$ is continuous, so $h_\infty(f^{-1}(C))$ is a compact subset of a metric space and therefore closed as claimed.  Thus~$f'$ is a continuous bijection between compact Hausdorff spaces and is therefore a homeomorphism.  By construction~$f'\circ h_{1/n}$ approximates~$f$.

The beauty of the Bing Shrinking Criterion is that in order to see that a map can be approximated by homeomorphisms, we do not need to see the existence of homeomorphisms~$h_{\eps}$ which converge; rather for different~$\eps$ they can be constructed independently.

\subsection{Interlacing discs in a solid torus}

The following two definitions appear in Ancel-Starbird~\cite{Ancel-Starbird-1989} and in Wright~\cite[Appendix~A]{Wright-Bing-Whitehead-1989}.

\begin{defi}[Meridional discs]\label{defn:meridional-discs}
A \emph{meridian} of a solid torus $T$ is a simple closed curve in $\partial T$ which bounds a disc in $T$ but not in $\partial T$.  A \emph{meridional disc} of $T$ is a locally flat disc $\Delta \subset T$ such that $\partial T \cap \Delta = \partial \Delta$ is a meridian of $T$.
\end{defi}

\begin{defi}[Interlacing discs]\label{defn:interlacing-discs}
Let $T$ be a solid torus.  Two disjoint collections of pairwise disjoint meridional discs $A = \bigcup_{i=1}^k \,A_i$ and $B = \bigcup_{i=1}^k\, B_j$ for $T$ are called a \emph{$k$-interlacing collection of meridional discs}, if each component of $T \setminus (A \cup B)$ has precisely one $A_i$ and one $B_j$ in its closure.  We make the convention that a $0$-interlacing of meridional discs is the empty set.

We say that two disjoint subsets $A,B \subset T$ form a \emph{$k$-interlacing} for $T$, for $k \geq 1$, if there are subsets $A' \subseteq A$ and $B'\subseteq B$ which form a $k$-interlacing collection of meridional discs for $T$, as in the previous paragraph, such that it is impossible to find such subsets which form a $(k+1)$-interlacing collection of meridional discs for $T$.

\end{defi}

\begin{defi}[Meridional $k$-interlacing]
We call a $k$-interlacing a \emph{meridional $k$-interlacing} if all components of $A$ and $B$ are meridional discs of $T$.
\end{defi}

For a decomposition inside a torus $T$ defined as the intersection of nested tori as above we will use $k$-interlacings for $T$ to measure the size of the nested tori. We will show that the decomposition is shrinkable if and only if there is an ambient isotopy of the nested tori such that for every interlacing there exists a stage such that each torus of this stage meets at most one of the discs of the interlacing. This motivates the next definition.

We will define a function $D_L\colon \bbN_0\to \bbN_0$ called the \emph{disc replicating function of a link $L$}. First we will define functions $U_L,D_L$ which assign a number to each $k$-interlacing.  Then we will show that these functions only depend on $k$ and not on the specific interlacing.  Furthermore we will show that $U_L=D_L$ for all interlacings.


\begin{defi}\label{defn:upper-interlacing-no}
Let $A,B$ be a meridional $k$-interlacing of a torus $T$. For a link $L$ in $T$ we define $U_L(A,B)$ to be the maximal integer $k$ such that for any link $L'$ which is ambient isotopic to $L$ and any closed regular neighbourhood $\cl(\nu L')$ which \emph{intersects $A,B$ only in meridional discs} there is at least one connected component $\cl(\nu L'_j)$ of $\cl(\nu L')$ such that the intersection with $A,B$ gives rise to at least a $U_L(A,B)$-interlacing for the solid torus $\cl(\nu L'_j)$. Since $\cl(\nu L')$ intersects $A,B$ only in meridional discs this will always be a meridional $U_L(A,B)$-interlacing. In particular, $U_L(\emptyset)=0$.
\end{defi}

\begin{defi}\label{defn:lower-interlacing-number}
Let $A,B$ be a meridional $k$-interlacing of a torus $T$. For a link $L$ in $T$ we define $D_L(A,B)$ to be the maximal integer $k$ such that for any link $L'$ which is ambient isotopic to $L$ and any closed regular neighbourhood $\cl(\nu L')$ \emph{whose boundary intersects $A,B$ transversely and only in meridians}, there is at least one connected component $\cl(\nu L'_j)$ of $\cl(\nu L')$ such that the intersection with $A,B$ gives rise to at least a $D_L(A,B)$-interlacing for the solid torus $\cl(\nu L'_j)$. In particular, $D_L(\emptyset)=0$.
\end{defi}

\begin{rem}\label{remark:explaining-difference-between-functions}
  The difference between Definitions~\ref{defn:upper-interlacing-no} and~\ref{defn:lower-interlacing-number} is that for $U_L$, only intersections in meridional discs of $\cl(\nu L')$ are allowed, while for $D_L$ intersections where the boundary $\partial(\cl(\nu L'))$ intersects $A,B$ in meridians are permitted.  In the latter case, for example, there might be an annulus in $(A \cup B) \cap \cl(\nu L')$ such that both boundary curves are meridians of $\cl(\nu L')$.
\end{rem}

Now we show that the interlacing numbers defined above do not depend on the interlacing $A,B$.

\begin{lemma}
\label{lemma:independence}
For any two \emph{meridional} $k$-interlacings $(A,B)$ and $(A',B')$, and any link $L$, the numbers $D_L(A,B)$ and $D_L(A',B')$ agree; moreover the numbers $U_L(A,B)$ and $U_L(A',B')$ also agree.
\end{lemma}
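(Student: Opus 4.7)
The plan is to reduce the statement to the existence of an ambient isotopy of $T$ sending one interlacing to the other. Specifically, I will show that for any two meridional $k$-interlacings $(A,B)$ and $(A',B')$ of $T$, there is an ambient isotopy $H \colon T \times [0,1] \to T$ with $H_0 = \id_T$, $H_1(A) = A'$, and $H_1(B) = B'$; the invariance of $D_L$ and $U_L$ then follows formally.

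To construct $H$, I would argue as follows. Writing $T \cong S^1 \times D^2$, a single meridional disc is isotopic to $\{\mathrm{pt}\} \times D^2$, so a system of $2k$ pairwise disjoint meridional discs is isotopic to a standard system $\{e^{i\theta_1}, \ldots, e^{i\theta_{2k}}\} \times D^2$; by isotopy extension this extends to an ambient isotopy of $T$. Hence both $(A,B)$ and $(A',B')$ can be brought to such standard form. Once in standard form, the alternating-label condition imposed by the definition of a $k$-interlacing leaves only the freedom of a cyclic shift of the $2k$ discs; this is realized by a rotation of the $S^1$-factor of $T$, itself an ambient isotopy. Composing these moves produces $H$ with $H_1(A) = A'$ and $H_1(B) = B'$.

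Given such $H$, I would transfer the quantities as follows. Suppose $L'$ is ambient isotopic to $L$ in $T$ and $\cl(\nu L')$ satisfies the hypotheses appearing in the definition of $D_L(A,B)$ or $U_L(A,B)$. Then $H_1(L')$ is also ambient isotopic to $L$ by concatenation of isotopies, and $H_1(\cl(\nu L')) = \cl(\nu H_1(L'))$ is a closed regular neighbourhood of $H_1(L')$ which intersects $A' = H_1(A)$ and $B' = H_1(B)$ in the analogous way, since $H_1$ is a diffeomorphism that respects being a meridional disc of $\cl(\nu L')$, being a meridian of $\partial(\cl(\nu L'))$, and being a transverse intersection. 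Any sub-interlacing of a component $\cl(\nu L'_j)$ built from intersections with $A$ and $B$ is carried by $H_1$ to a sub-interlacing of the same size of the corresponding component of $\cl(\nu H_1(L'))$ built from intersections with $A'$ and $B'$. This gives $D_L(A,B) \leq D_L(A',B')$ and $U_L(A,B) \leq U_L(A',B')$, and applying the same argument to $H^{-1}$ yields the reverse inequalities.

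The main obstacle is verifying that the ambient isotopy can be arranged to respect the labelling $A$ versus $B$, not just the underlying unlabelled disc system. This reduces to the observation that, within the standard labelled configurations of alternating discs around the meridional $S^1$, any two are related by a cyclic rotation of $S^1$, and such a rotation extends to an ambient isotopy of $T$ fixing $\partial D^2$ pointwise. Once this point is settled, the rest of the proof is a routine application of the fact that $H_1$ is a diffeomorphism, so all the combinatorial data entering the definitions of $D_L$ and $U_L$ are preserved.
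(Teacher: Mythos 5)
Your overall strategy --- standardise both interlacings, match them by a homeomorphism/ambient isotopy of $T$, and transfer the data through that map --- is the same as the paper's, and the transfer step (including the observation that only a cyclic shift is needed to match the $A$/$B$ labels, and the use of $H^{-1}$ for the reverse inequality) is fine. But there is a genuine gap at the central claim that an ambient isotopy with $H_1(A)=A'$ and $H_1(B)=B'$ exists. Under Definition~\ref{defn:interlacing-discs}, a ($k$-)interlacing $(A,B)$ is \emph{not} required to consist of exactly $2k$ alternately labelled discs: $A$ and $B$ are only required to contain subsets forming a $k$-interlacing collection, and may contain extra (e.g.\ consecutive same-label) meridional discs. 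Two meridional $k$-interlacings can therefore have different numbers of components, in which case no homeomorphism of $T$ carries one onto the other, and even with equal cardinalities the cyclic label patterns need not match. Your ``main obstacle'' paragraph addresses only the cyclic-shift issue for the minimal alternating configuration, so your argument proves the lemma only in the special case where both interlacings are $k$-interlacing \emph{collections} of exactly $2k$ discs.

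The paper's proof deals with exactly this point: rather than demanding $h(A)=A'$ and $h(B)=B'$, it pushes off parallel copies of the discs of $A$ and $B$ to obtain enlarged collections $E\supseteq A$, $F\supseteq B$ that still form a meridional $k$-interlacing, arranges (via the Sch\"onflies theorem) a homeomorphism $h$ with only the containments $h(A')\subseteq E$ and $h(B')\subseteq F$, and checks that adding parallel copies in a small neighbourhood does not increase the interlacing numbers induced on the components of $\cl(\nu L')$. To repair your proof you would need to add this step (or an equivalent one), together with the verification that redundant parallel discs do not change $D_L$ or $U_L$; the remainder of your argument then goes through as you describe.
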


\begin{proof}
We first note that for two collections of $k$ meridional discs $A$ and $A'$ in $T$, by the Sch\"{o}nflies theorem, there exist an orientation preserving homeomorphism $h:T\to T$ such that $h(A)=A'$.

Now let two meridional $k$-interlacings $(A,B)$ and $(A',B')$ and a link $L$ be given. Let $L'$ be a link ambient isotopic to $L$ and let $\nu L'$ be a regular neighbourhood of $L'$ such that the intersection of each component of $\cl(\nu L')$ with $A,B$ gives rise to a (meridional) $m$-interlacing with $m\leq D_L(A,B)$ (respectively $m\leq U_L(A,B)$).  Such an ambient isotopy can always be found by definition of $D_L(A,B)$ (respectively $U_L(A,B)$).  We can push off enough copies of parallel discs of the discs in $A$ and $B$ such that there exists an orientation preserving homeomorphism $h:T\to T$ with $h(A')\subseteq A$ and $h(B')\subseteq B$.  (There might be extra consecutive $A$ or $B$ discs in a given $k$-interlacing, which get deleted in order to form a $k$-interlacing collection of meridional discs.)

Let $E$ be the union of $A$ together with the discs pushed off $A$, and let $F$ be the union of $B$ together with the discs pushed off $B$.  Since we can push off copies in a small neighbourhood, we can achieve that $E,F$ is again a meridional $k$-interlacing and that the intersection of each component of $\cl(\nu L')$ with $E,F$ gives rise to a (meridional) $m$-interlacing with $m\leq D_L(A,B)$ (respectively $m\leq U_L(A,B)$).  By \cite[Proposition 1.10]{Burde-Zieschang:1985-1}, which says that a homeomorphism of the ambient space carrying one link to another is the same as an ambient isotopy between the links, the link $h^{-1}(L')$ is ambient isotopic to $L$ and the intersection of each component of $h^{-1}(\cl(\nu (L')))$ with $A',B'$ gives rise to a (meridional) $m$-interlacing with $m\leq D_L(A,B)$ (respectively $m\leq U_L(A,B)$), since $h(A')\subseteq E$ and $h(B')\subseteq F$. Therefore, $D_L(A',B')\leq D_L(A,B)$ (and $U_L(A',B')\leq U_L(A,B)$). Since the situation is symmetric in $A,B$ and $A',B'$ this proves the lemma.
\end{proof}

\begin{defi}[Disc replicating function]\label{defn:disc-rep-fns}
For a link $L$ in a torus $T$ the functions $U_L\colon\bbN_0\to\bbN_0$ and $D_L\colon\bbN_0\to\bbN_0$ are defined by $U_L(k):=U_L(A,B)$ and $D_L(k):=D_L(A,B)$ where $A,B$ is any meridional $k$-interlacing of $T$, $U_L(A,B)$ is from \cref{defn:upper-interlacing-no} and $D_L(A,B)$ is from \cref{defn:lower-interlacing-number}. These functions are well defined by \cref{lemma:independence} and have the property $D_L(0)=U_L(0)=0$.

By \cref{lemma:interlacing-functions-coincide} below, these two functions coincide.   Thus we call $D_L$ the \emph{disc replicating function for the link $L$}.
\end{defi}

In \cref{lemma:interlacing-functions-coincide} we will make use of the following Technical Lemma of Ancel and Starbird~\cite[Page~301]{Ancel-Starbird-1989}, which we state here for the convenience of the reader.

\begin{lemma}[Ancel-Starbird Technical Lemma] \label{lemma:ancel-starbird-technical-lemma}
  Suppose $P_1,P_2,\dots,P_m$ is a sequence of parallel planes in $\R^3$ such that if $1 \leq i < j <k \leq m$, then $P_j$ separates $P_i$ and $P_k$.  Set $P = P_1 \, \cup \, P_2 \, \cup \, \cdots \, \cup \, P_m$.  Suppose $T$ is a solid torus in $\R^3$ such that $\partial T$ is transverse to $P$, each component of $\partial T \cap P$ is a meridian of $T$, and $T \cap P_i \neq \emptyset$ for $1 \leq i \leq m$.  Then there is a sequence $A_1,A_2,\dots,A_{2m}$ of pairwise disjoint meridional discs of $T$ in cyclic order on $T$ such that $A_i \cup A_{2m+1-i} \subset P_i$ for $1 \leq i \leq m$.
\end{lemma}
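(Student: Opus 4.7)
The plan is to reduce the problem to a combinatorial question about the cyclic arrangement of the meridional circles $\partial T\cap P$ on $\partial T$, and then to convert the output into meridional discs of $T$ lying in the planes.

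First, the components of $\partial T\cap P$ are pairwise disjoint parallel meridians on $\partial T$; they cut $\partial T$ into a cyclic family of annuli, each disjoint from $P$ and hence lying in a single connected component of $\R^3\setminus P$---either an open slab $S_j=\{c_j<z<c_{j+1}\}$ for $1\le j\le m-1$, or one of the unbounded regions $S_0=\{z<c_1\}$ and $S_m=\{z>c_m\}$. Passing from one annulus to its cyclic neighbour changes the slab index by $\pm 1$. Since $\partial T$ is transverse to every $P_i$ and $T\cap P_i\neq\emptyset$, $\partial T$ has points both above and below each $P_i$; in particular $\partial T$ meets both $S_0$ and $S_m$, so the cyclic walk of slab-indices reaches both $0$ and $m$ and therefore visits every index in $\{0,1,\ldots,m\}$. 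From this walk I extract the desired cyclic pattern as follows. Fix a cyclic starting point at which the walk lies in $S_0$, and let $\tau$ be the first subsequent cyclic time at which the walk enters $S_m$. For $1\le i\le m$, let $\tilde\gamma_i$ be the meridian at the last up-crossing of $P_i$ before $\tau$, and let $\tilde\gamma_{2m+1-i}$ be the meridian at the first down-crossing of $P_i$ after $\tau$. Since after the last up-crossing of $P_i$ the walk cannot dip below $P_i$ before reaching $S_m$ (otherwise a later up-crossing of $P_i$ would exist), one verifies that the $2m$ meridians $\tilde\gamma_j$ are distinct and appear on $\partial T$ in the desired cyclic order with plane-labels $1,2,\ldots,m,m,m-1,\ldots,1$.

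Second, a meridian $\gamma\subset\partial T\cap P_\ell$ bounds, via the Jordan curve theorem, a disc $D_\gamma\subset P_\ell$, and $D_\gamma$ is a meridional disc of $T$ precisely when $\gamma$ is \emph{innermost} in $P_\ell$, meaning that no other meridian of $\partial T\cap P_\ell$ lies inside $\gamma$ in $P_\ell$: in that case $D_\gamma$ meets $\partial T$ only along $\gamma$, and must lie in $T$ rather than in $\R^3\setminus\Int(T)$, since a meridian of a solid torus does not bound a disc in its exterior. An Euler characteristic count applied to the closed orientable surface $\partial(T\cap\{z\le c_i\})$ (whose Euler characteristic must be even) shows that the number of components of $\partial T\cap P_i$ is even, and a parity analysis of the nesting forest of $\partial T\cap P_i$ in $P_i$---the depth of any innermost leaf must be even for its bounded disc to lie in $T$---then forces at least two innermost meridians on each plane.

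The main obstacle is that the meridians $\tilde\gamma_j$ selected by the cyclic walk need not themselves be innermost in their planes. I will handle this by replacing each $\tilde\gamma_j$ by an innermost meridian on the same plane $P_{\ell_j}$ lying in the longitudinal arc of $\partial T$ strictly between the neighbouring selected meridians $\tilde\gamma_{j-1}$ and $\tilde\gamma_{j+1}$; the existence of such a replacement follows from tracking which leaves of the nesting forest of $\partial T\cap P_{\ell_j}$ in $P_{\ell_j}$ correspond to which positions on $\partial T$, and arguing via the parity constraints of the previous step that at least one innermost leaf on $P_{\ell_j}$ lies in the required arc. Once the replacement is made, the resulting $2m$ innermost meridians bound pairwise disjoint meridional discs $A_1,\ldots,A_{2m}$: those in the same plane are disjoint because innermost meridians of a plane are pairwise disjoint, and those in different planes are disjoint because distinct parallel planes are disjoint.
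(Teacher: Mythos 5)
Your reduction is set up well, and it is worth saying that the paper itself does not prove this lemma (it is quoted from Ancel--Starbird, whose proof is an induction on the ``height'' of the components of $P_i\cap \partial T$); so you are being compared against that argument. Your cyclic-walk bookkeeping correctly produces $2m$ crossing circles $\tilde\gamma_1,\dots,\tilde\gamma_{2m}$ in the required cyclic pattern with the required plane labels; your observation that a meridional disc of $T$ lying in $P_i$ is necessarily bounded by an innermost curve of $\partial T\cap P_i$ is correct (and shows that any proof must end by locating innermost curves in the right cyclic positions); and the parity argument producing at least two innermost curves per plane is sound once the depth convention is fixed. In the case where every component of $T\cap P$ is already a meridional disc, your walk argument alone finishes, and it is essentially the argument the paper gives in the proof of \cref{lemma:consecutive-discs-ancel-starbird-technical}.

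The gap is at the one step that carries all of the difficulty: the claim that each plane $P_{\ell_j}$ contains an innermost curve lying in the longitudinal arc of $\partial T$ strictly between $\tilde\gamma_{j-1}$ and $\tilde\gamma_{j+1}$ is asserted (``follows from tracking which leaves \dots and arguing via the parity constraints'') rather than proved. The parity constraints live entirely in the nesting forest of $\partial T\cap P_{\ell_j}$ inside the plane; by themselves they say nothing about where the innermost leaves sit in the cyclic order on $\partial T$, and correlating these two orderings is precisely the content of the lemma --- it is exactly the point at which Ancel and Starbird resort to their height-reducing induction rather than a direct selection. I was not able to verify your arc claim by any short argument (ruling out bad configurations seems to require invoking the hypothesis that every curve of $\partial T\cap P$ is a meridian in an essential, global way), and even granting it, two further points are unaddressed: consecutive replacement arcs overlap (both $(\tilde\gamma_{j-1},\tilde\gamma_{j+1})$ and $(\tilde\gamma_j,\tilde\gamma_{j+2})$ contain $(\tilde\gamma_j,\tilde\gamma_{j+1})$), so independently chosen replacements need not come out in the correct cyclic order; and $\tilde\gamma_m$, $\tilde\gamma_{m+1}$ lie on the same plane with overlapping arcs, so their replacements could coincide, leaving fewer than $2m$ pairwise disjoint discs. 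To complete a proof along your lines you would need a genuine argument pinning down the cyclic position of innermost leaves; otherwise the standard route is to first eliminate the annulus and disc-with-holes components of $T\cap P$ by induction on height, as Ancel and Starbird do, and only then run your selection argument.
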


In general, even though intersections of the planes $P_i$ with $\partial T$ are always meridians, the intersections with $T$ may be discs, annuli, or discs with holes.  Ancel and Starbird define a notion of the \emph{height} of a component of $P_i \cap \partial T$, which is the maximal integer $h$ such that there is a subset of $P_i \cap \partial T$ which comprises $h$ concentric circles, whose outside circle is the given component of $P_i \cap \partial T$.  If the height is one for all components of $P \cap T$, then all intersections are meridional discs and the proof is straightforward; it is given in the proof of \cref{lemma:consecutive-discs-ancel-starbird-technical}.  Their proof uses induction to lower the number of components with height greater than one.

\begin{lemma}\label{lemma:interlacing-functions-coincide}
  The functions $U_L,D_L \colon \mathbb{N}_0 \to \mathbb{N}_0$ coincide.
\end{lemma}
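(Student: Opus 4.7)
The plan is to prove the two inequalities $U_L(k) \geq D_L(k)$ and $U_L(k) \leq D_L(k)$ separately. The first is immediate from the definitions: the value $U_L(k)$ is the minimum over configurations in which $A \cup B$ meets $\cl(\nu L')$ only in meridional discs of the max-over-components interlacing, while $D_L(k)$ is the corresponding minimum over the strictly larger class of configurations in which only $\partial \cl(\nu L')$ is required to meet $A \cup B$ in meridians. A minimum of the same function over a subclass can only be larger, so $U_L(k) \geq D_L(k)$.

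For the reverse inequality, I would take a configuration $(L', \cl(\nu L'))$ realising $D_L(k)$ and modify it by an ambient isotopy to one whose intersection with $A \cup B$ consists only of meridional discs, with max-over-components interlacing at most $D_L(k)$. Producing such a configuration exhibits an admissible choice for $U_L(k)$ of value at most $D_L(k)$, yielding $U_L(k) \leq D_L(k)$. The modification is to be performed by a local height-reduction procedure in the spirit of the proof of the Ancel--Starbird Technical Lemma, \cref{lemma:ancel-starbird-technical-lemma}. Concretely, if some component $S$ of $A_i \cap \cl(\nu L'_j)$ is not a meridional disc, then its boundary circles, viewed as nested simple closed curves in the planar disc $A_i$, admit an innermost member $c$. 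Since $c$ is a meridian of $\cl(\nu L'_j)$, it bounds both a disc $D_c \subset A_i$ whose interior is disjoint from the other boundary curves, and a meridional disc $D'_c$ of $\cl(\nu L'_j)$. The sphere $D_c \cup D'_c$ bounds a 3-ball in the ambient torus, and a local ambient isotopy supported near this ball strictly decreases the number of non-meridional-disc components of the intersection while leaving distant intersections untouched.

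The crucial observation is that such a local move cannot increase the number of meridional disc components of any intersection $A_{i'} \cap \cl(\nu L'_{j'})$; at worst it absorbs some of these into the eliminated piece, which can only decrease the interlacing of the affected component. Hence the max-over-components interlacing is non-increasing, and after finitely many moves we arrive at a meridional-disc-only configuration with max-component interlacing $\leq D_L(k)$. The main technical obstacle I expect is verifying that each local isotopy can be realised in a sufficiently small neighbourhood of the 3-ball cobounded by $D_c$ and $D'_c$, so that no new intersections are created elsewhere and the other components of $\cl(\nu L')$ remain undisturbed; this should follow by choosing $D'_c$ parallel and close to $D_c$ inside $\cl(\nu L'_j)$ and appealing to standard tubular-neighbourhood and innermost-disc arguments, which mirror exactly the height-reduction step in the Ancel--Starbird proof.
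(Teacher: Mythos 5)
Your first inequality $U_L(k)\geq D_L(k)$ is correct and is exactly the paper's opening observation. The problem is the reverse direction. The difficulty does not yield to a standard innermost-disc induction, because \emph{every} intersection curve of $A\cup B$ with $\partial\cl(\nu L'_j)$ is an essential meridian: a curve that is genuinely innermost in $A_i$ already bounds a meridional disc of some $\cl(\nu L'_{j'})$ (it cannot bound on the outside, since a meridian of $L'_{j'}$ is not null-homotopic in the link complement), so the innermost curves are precisely the \emph{good} intersections and give you nothing to remove. For a bad component $S$ (an annulus or disc-with-holes), the circle $c$ that is innermost among the boundary curves of $S$ bounds a disc $D_c\subset A_i$ that lies on the \emph{opposite} side of $\partial\cl(\nu L'_j)$ from the meridional disc $D'_c$, and whose interior may contain many further intersection curves with other components of $\cl(\nu L')$, or with $\cl(\nu L'_j)$ itself, so that $D_c$ may even meet $D'_c$. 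Consequently the ball cobounded by $D_c$ and $D'_c$ is not small: it can contain pieces of the link and sweep through other interlacing discs $A_{i'},B_{i'}$. Your proposed isotopy is therefore not local, and the assertion that the move ``cannot increase the number of meridional disc components of any intersection'' is exactly the point that needs proof---dragging a chunk of $\cl(\nu L'_{j'})$ across such a ball can push it through interlacing discs it did not meet before, creating new meridional-disc intersections and raising the interlacing of that component. (Deleting discs cannot raise an interlacing number, as you say, but the move can also \emph{add} discs, and that direction is not addressed.)

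This is where the paper does its real work, and via a dual move: it leaves the link alone and modifies the interlacing instead. Starting from a meridional disc $\Delta=C\cap T_1$, it takes the next intersection $E$ of $T_1$ with $A\cup B$ in the direction of the orientation of $L_i$ and the next meridional-disc intersection $F$, proves the key combinatorial claim that $E=C$ or $E=F$ by applying the Ancel--Starbird Technical Lemma (\cref{lemma:ancel-starbird-technical-lemma}) to an auxiliary solid torus built by attaching a cylinder to $T_1$, and then reroutes $E$ to a new disc $E'$ running along $\partial T_1$ and a parallel copy of $\Delta$; the claim is what guarantees that the induced interlacing of $T_1$ does not go up, and \cref{lemma:independence} then converts the statement about the new interlacing back into one about the original $(A,B)$. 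Your write-up has no counterpart to the $E=C$ or $E=F$ step, and without it (or some substitute control on how the modified configuration meets the \emph{other} interlacing discs) the induction does not close. Either adopt the paper's disc-rerouting move, or, if you insist on isotoping the link, isolate and actually prove the monotonicity statement your local move is supposed to satisfy.
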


\begin{proof}
Since, in the definition of $U_L$, only regular neighbourhoods which intersect the interlacing in meridional discs are allowed, we have the inequality $U_L(k)\geq D_L(k)$ for all $k\in\bbN_0$.  It is a priori possible that ambient isotopies which allow disc-with-holes intersections could reduce the number of the induced interlacing.  The current proof shows that this is not possible.

Our aim is to show that $U_L(k) \leq D_L(k)$ for all $k \in \mathbb{N}_0$.  To achieve this we start with a $k$-interlacing of a solid torus $T$ which intersects a regular neighbourhood of the link components in such a way that the intersections with the boundary $\partial\cl(\nu L_i)$, are in meridians, for all $i$.  We want to alter the interlacing so that intersections with $\cl(\nu L_i)$, for all $i$, are meridional discs, without increasing the interlacing number.  Then since the interlacing number is independent of the interlacing, by \cref{lemma:independence}, we will see that $U_L(k) \leq D_L(k)$ for all $k \in \mathbb{N}_0$.

So let $T$ be a solid torus, let $L_i$ be a component of a link $L \subset T$ and let a meridional $k$-interlacing $A,B$ of $T$ be given.  For the rest of this proof we denote $T_1 := \cl(\nu L_i)$.

\begin{claim}
Either $(A \cup B) \cap T_1 = \emptyset$ or there exists at least one meridional disc $\Delta$ in $(A \cup B) \cap T_1$.
\end{claim}

 To prove the claim suppose that $(A \cup B) \cap T_1 \neq \emptyset$.  Let $G$ be a disc of the interlacing which intersects $T_1$.  Look at an innermost circle in $G$, of the intersections of $\partial T_1$ and $G$.  This either bounds a disc inside $T_1$ or outside $T_1$, since it is innermost.  The circle of intersection is a meridian of $T_1$, and so if it bounds a disc in the complement of $T_1$, then a meridian of the knot $L_i$ would be null homotopic in the complement $S^3 \setminus \nu L_i$ of $L_i$.  Thus the innermost circle bounds a meridional disc in $T_1$, as desired.  This completes the proof of the claim.

Choose one meridional disc $\Delta$ in $(A \cup B) \cap T_1$, and let $C \in \{A_i,B_j\}$ be the disc of the $k$-interlacing which gives rise to it.  Starting at the meridian of $\Delta$ on the boundary of $T_1$ and travelling in the direction of the orientation of $L_i$, let $E$ be the disc of the interlacing from which arises the next intersection of $T_1$ with $A \cup B$; since we only have intersections in meridians on the boundary there is a well defined next intersection.  Also let $F$ be the disc of the interlacing from which arises the next intersection in a meridional disc.

\begin{claim}
Either $E=C$ or $E=F$.
\end{claim}

Suppose that $E$ is neither equal to $C$ nor to $F$.  Then the discs $C$ and $F$ give rise to two parallel planes in $\R^3$, thought of as $\R^2 \times \R$, the universal cover of the interior of $T$). Since $E$ intersects $T_1$ in between $C$ and $F$ it gives rise to a parallel plane in $\R^3$ between the other two.  Now cut off $T_1$ at its meridional discs of intersection with $C$ and $F$ and build a new torus as in Figure~\ref{figure:meridional-intersection}.  In Figure~\ref{figure:meridional-intersection}, the additional cylinder added is labelled $Y.$

\begin{figure}[h]
{\psfrag{C}{$C$}
\psfrag{E}{$E$}
\psfrag{F}{$F$}
\psfrag{T}{$T$}
\psfrag{L}{$T_1$}
\psfrag{G}{$Y$}
\includegraphics[width=11cm]{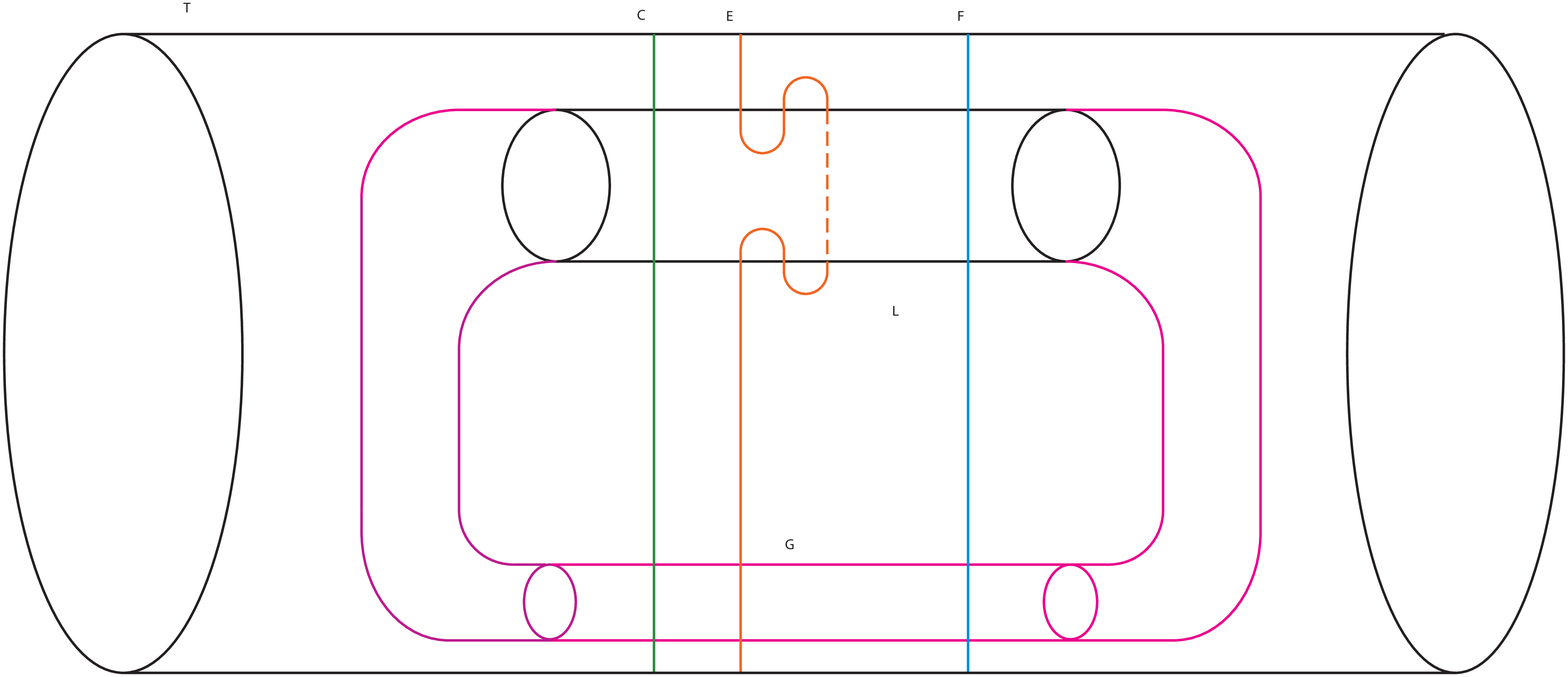}}
\caption{Adding a cylinder $Y$ to build a solid torus intersecting $C$, $E$ and $F$ (at least) twice each.}
\label{figure:meridional-intersection}
\end{figure}

 The additional part of this new torus is embedded in a standard way, and chosen so as to have its intersections with $C,E$ and $F$ to be meridional discs.   This new torus only intersects $E$ in one meridional disc, a contradiction to \cref{lemma:ancel-starbird-technical-lemma}.  Thus we deduce that the claim holds.

Now we change the interlacing of $T$ in the following way.  We cut off $E$ at its intersection with $L_i$ and replace it with a parallel copy of the boundary of $T_1$ and a parallel copy of $C \cap T_1= \Delta$.  See Figure~\ref{figure:replacing-a-meridian} for an indication of how to alter the interlacing.  Here the new interlacing disc is labelled $E'$.

\begin{figure}
{\psfrag{C}{$C$}
\psfrag{E}{$E$}
\psfrag{T}{$T$}
\psfrag{L}{$T_1$}
\psfrag{G}{$E'$}
\includegraphics[width=10cm]{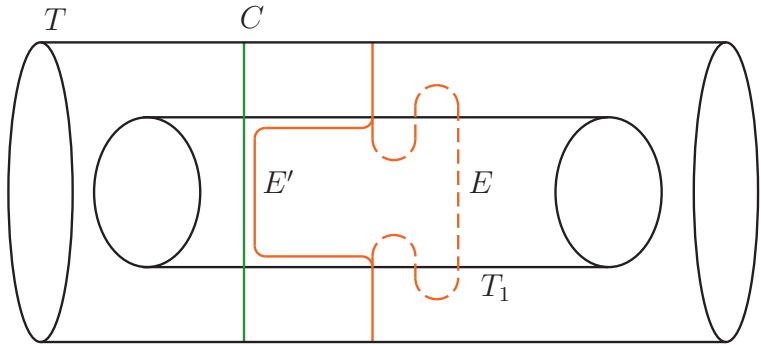}}
\caption{Altering the interlacing disc $E$ to $E'$, so that the intersection $E' \cap T_1$ is a meridional disc.}
\label{figure:replacing-a-meridian}
\end{figure}

The new interlacing of $T$ gives rise to an interlacing of $T_1$ with only one new meridional disc in $T_1$ (and maybe fewer interlacings in $T_1$ or other components of $\nu L$, since $E$ might intersect $\nu L$ elsewhere). But since $T_1$ had a meridional disc coming from $E$ at the same position anyway this new meridional disc only gives rise to an interlacing of $T_1$ of the same number as before.  Also the move has not changed the number of the interlacing of $T$: this is still $k$.  Inductively, by repeating this move as many times as required (which is finitely many times since discs are locally flat and both the discs and tori are compact), we can remove all intersections which are not meridional discs and obtain an induced interlacing of $T_1$ which has at most the same interlacing number as the old induced interlacing of $T_1$. Therefore $U_L(k) \leq D_L(k)$.  Since we already know that $D_L(k) \leq U_L(k)$, we have equality.
\end{proof}

As stated in \cref{defn:disc-rep-fns}, from now on we denote both~$U_L$ and~$D_L$ by~$D_L$.

\section{Proof of the main theorem}

We remind the reader of our main theorem.

\label{S:proof}
\begin{thm}
\label{thmA}
A decomposition $\mcD$ of $S^3$ obtained from a sequence of links $\{L^i\}_{i \in \bbN}$ is shrinkable if and only if
\[\lim_{p\to\infty} (D_{L^{m+p}}\circ\ldots\circ D_{L^m})(k)=0\]
for all $k,m\in\bbN$.
\end{thm}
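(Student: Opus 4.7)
The plan is to apply the Bing Shrinking Criterion (\cref{theorem:bing-shrinking-criterion2}) in both directions, using the disc replicating function to translate combinatorial interlacing data into geometric diameter bounds on images of decomposition elements.

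For sufficiency (limit condition $\Rightarrow$ shrinkable), fix $\epsilon>0$. By upper semi-continuity of $\mathcal{D}$, I would choose $m_0$ large enough that every component of $T_{m_0}$ has $S^3/\mathcal{D}$-image of diameter less than $\epsilon$, and, after a preliminary thinning isotopy, that each such component admits a meridional $k$-interlacing $(A,B)$ whose complementary cells have $S^3$-diameter less than $\epsilon$. The hypothesis, applied with initial level $m_0+1$, supplies $p$ with $(D_{L^{m_0+p}}\circ\cdots\circ D_{L^{m_0+1}})(k)=0$. Iterating the defining property of $D_L$ through levels $m_0+1,\ldots,m_0+p$ yields a sequence of ambient isotopies, each supported in a single component of the preceding stage and therefore combining into a single self-homeomorphism $h_\epsilon$ of $S^3$ supported in $T_{m_0}$. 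After $h_\epsilon$, every component of $T_{m_0+p}$ has $0$-interlacing with $(A,B)$, so it meets at most the $A$-discs or at most the $B$-discs, hence lies in a single cell of diameter less than $\epsilon$. This bounds $\operatorname{diam}(h_\epsilon(\Delta))$ by $\epsilon$ for every decomposition element, verifying criterion~(2); criterion~(1) holds because $h_\epsilon$ preserves each component of $T_{m_0}$, which has small $Y$-diameter.

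For necessity (shrinkable $\Rightarrow$ limit), I argue by contrapositive. Suppose there exist $k,m$ with $(D_{L^{m+p}}\circ\cdots\circ D_{L^m})(k)\geq 1$ for all $p$. Fix a meridional $k$-interlacing $(A,B)$ inside a chosen component of $T_{m-1}$ and set $\delta:=\min_{i,j}d_{S^3}(A_i,B_j)>0$. Given any self-homeomorphism $h$ of $S^3$, a standard modification makes $h$ the identity outside $T_{m-1}$ at negligible cost in criterion~(1), so that the restriction of $h$ to each torus component may be interpreted as an ambient isotopy of the nested links. By induction on $j$, applying the defining property of $D_{L^{m+j}}$ componentwise, there exist nested components $T^{(m+j)}\subset T_{m+j}$ for which $h(T^{(m+j)})$ has interlacing at least $(D_{L^{m+j}}\circ\cdots\circ D_{L^m})(k)\geq 1$ with $(A,B)$. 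Organising the valid components into a finitely branching tree under inclusion (each node has at most $n_{m+j+1}$ children) with nodes at every depth, K\"onig's lemma produces an infinite nested chain $T^{(m)}\supset T^{(m+1)}\supset\cdots$. The decomposition element $\Delta:=\bigcap_j T^{(m+j)}$ then satisfies $h(\Delta)=\bigcap_j h(T^{(m+j)})$; since each $h(T^{(m+j)})$ contains a point in some $A_i$ and a point in some $B_j$, a compactness extraction (choose such points at each level and pass to convergent subsequences) yields two points of $h(\Delta)$ at distance at least $\delta$, violating criterion~(2) for any $\epsilon<\delta$.

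The principal obstacle in both directions is reconciling the ambient-isotopy viewpoint used to define $D_L$ with the arbitrary self-homeomorphism allowed by the Bing Shrinking Criterion. In the sufficiency direction one must assemble stage-wise isotopies into a single global homeomorphism with no interference between stages, which works because each stage-$j$ isotopy is automatically supported inside a component of the stage-$(j-1)$ torus. In the necessity direction the critical step is reducing an arbitrary $h$ to one supported in $T_{m-1}$ and then invoking $D_L$ inductively in a componentwise fashion; a further delicate point is arranging the interlacing cells to have small $S^3$-diameter, which requires a preliminary thinning of the $T_{m_0}$-components in the spirit of the technique underlying \cref{lemma:ancel-starbird-technical-lemma}.
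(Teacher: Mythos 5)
Your proposal follows essentially the same route as the paper: the Bing Shrinking Criterion in both directions, $k$-interlacings with small complementary cells, stage-wise ambient isotopies assembled into a single homeomorphism supported in a deep stage for sufficiency, and an inductively produced nested chain of components meeting both $A$ and $B$ (hence a decomposition element of large image diameter) for necessity.

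Two points in your necessity argument are glossed over exactly where the paper does genuine work. First, before the defining property of $D_{L^{m+j}}$ (in the form of \cref{defn:lower-interlacing-number}) can be invoked at all, the discs $A,B$ must be perturbed so that they meet each $\partial T_s$ transversely and only in meridians; the paper devotes a paragraph to this, using Bing's classification of the intersection curves as inessential, meridional or longitudinal together with an innermost-disc argument to remove the first and third types. Second, your ``standard modification'' making $h$ the identity outside $T_{m-1}$ at negligible cost is not obviously available: \cref{theorem:bing-shrinking-criterion2} only controls displacement in the quotient metric, and straightening a homeomorphism to the literal identity near $\partial T_{m-1}$ with controlled displacement is a nontrivial (Edwards--Kirby-type) statement. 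The paper sidesteps this entirely by choosing the interlacing $A,B$ inside the image solid torus $h_\eps((T_{m-1})_r)$ with $d_{S^3}(A,B)>\eps$ and pulling it back: $h_\eps^{-1}(A\cup B)$ is a $k$-interlacing of $(T_{m-1})_r$ to which the inductive $D_L$ argument applies, and a nested chain of components meeting both $h_\eps^{-1}(A)$ and $h_\eps^{-1}(B)$ yields a decomposition element whose $h_\eps$-image meets both $A$ and $B$; you should adopt that pull-back formulation (this is \cref{lemma:intersects-A-and-B-implies-no-BSC}). The remaining discrepancies --- K\"onig's lemma versus the automatic nesting obtained by applying $D_L$ inside the previously chosen component, and the factor of two in the cell-diameter bound in the sufficiency direction --- are cosmetic.
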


For the proof we need the following lemma.

\begin{lemma}\label{lemma:intersects-A-and-B-implies-no-BSC}
Suppose that there are nonnegative integers $k,m$ such that for some $r$ and for any $k$-interlacing $A \cup B$ for $(T_{m-1})_r$ we have that for any $s\geq m$ there is a connected component $(T_{s})_{r'}$ of $T_s$ which has nonempty intersection with both of the collections of discs $A$ and $B$. Then $\mathcal{D}$ is not shrinkable.
\end{lemma}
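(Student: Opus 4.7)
The plan is to derive a contradiction from the Bing Shrinking Criterion (\cref{theorem:bing-shrinking-criterion2}). Suppose $\mcD$ is shrinkable and fix any $k$-interlacing $A \cup B$ of $(T_{m-1})_r$; set $\eta := d_X(A,B) > 0$. For an $\eps > 0$ to be chosen later, the criterion supplies a homeomorphism $h = h_\eps \colon S^3 \to S^3$ such that $d_Y(f(x), f(h(x))) < \eps$ for every $x$ and $\diam_X(h(\Delta)) < \eps$ for every $\Delta \in \mcD$.

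The first step is to promote the second estimate from decomposition elements to components of $T_s$ at some finite level; namely, that there exists $s \geq m$ with $\diam_X(h(C)) < 2\eps$ for every component $C$ of $T_s$. If no such $s$ existed, then for each $s$ one could pick a component $C_s$ of $T_s$ with $\diam_X(h(C_s)) \geq 2\eps$. Since each component of $T_{s+1}$ lies in a unique component of $T_s$ and each $T_s$ has only finitely many components, K\"onig's lemma applied to the finite-branching sub-tree of ancestors of the $C_s$'s extracts a nested sequence $B_m \supset B_{m+1} \supset \cdots$ of components, with $B_s$ an ancestor of some $C_{s'}$ ($s' \geq s$) so that $\diam_X(h(B_s)) \geq 2\eps$ for every $s$. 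The intersection $\bigcap_s B_s$ is a connected compact subset of $\bigcap_s T_s$, hence contained in a single decomposition element $\Delta^*$, and the diameter-limit identity for nested compacta yields
\[ \diam_X(h(\Delta^*)) \geq \diam_X\bigl(h(\bigcap_s B_s)\bigr) = \lim_{s \to \infty} \diam_X(h(B_s)) \geq 2\eps, \]
contradicting $\diam_X(h(\Delta^*)) < \eps$.

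Next I would apply the lemma's hypothesis at this $s$: there is a component $C^* \subseteq (T_{m-1})_r$ of $T_s$ with $A \cap C^* \neq \emptyset$ and $B \cap C^* \neq \emptyset$. Pick $a \in A \cap C^*$ and $b \in B \cap C^*$; then $d_X(h(a), h(b)) \leq \diam_X(h(C^*)) < 2\eps$. Combining this with the estimate $d_Y(f(x), f(h(x))) < \eps$ applied to $a$ and $b$, and the standard non-expansion $d_Y \leq d_X$ of the quotient metric, the triangle inequality in $Y$ gives
\[ d_Y(f(a), f(b)) < 4\eps. \]

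To close the argument one selects $\eps$ smaller than a quarter of a uniform positive lower bound on $d_Y(f(a), f(b))$ for any pair $(a, b)$ produced this way. When the interlacing can be chosen so that $f(A) \cap f(B) = \emptyset$ in $Y$, the quantity $\eta_Y := d_Y(f(A), f(B)) > 0$ serves as such a bound, and taking $\eps < \eta_Y/4$ closes the contradiction. I expect the main obstacle to be the complementary situation in which every $k$-interlacing is forced to have $f(A) \cap f(B) \neq \emptyset$ --- that is, some decomposition element always meets both sides of any interlacing, as happens for Whitehead-type continua whose single nondegenerate element meets every meridional disc. Resolving this case requires either perturbing the discs, exploiting the ``for any $k$-interlacing'' strength of the hypothesis to avoid the offending spanning decomposition elements, or a finer topological argument that reads the contradiction directly from the structure of the interlacing under $h$ rather than from a metric lower bound on $d_Y(f(A), f(B))$.
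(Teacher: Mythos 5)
There is a genuine gap, and it is exactly the one you flag in your last paragraph: the ``complementary situation'' you hope to avoid is in fact the \emph{only} situation. Your own K\"onig's-lemma argument, applied to the hypothesis of the lemma rather than to the sets $h(C_s)$, shows that for the fixed interlacing $A\cup B$ some single decomposition element already meets both $A$ and $B$; hence $f(A)\cap f(B)\neq\emptyset$ and $\eta_Y:=d_Y(f(A),f(B))=0$ for \emph{every} choice of $k$-interlacing. So no uniform positive lower bound on $d_Y(f(a),f(b))$ exists, the inequality $d_Y(f(a),f(b))<4\eps$ is vacuous, and the contradiction never closes. The strategy of fixing $A\cup B$ in advance and measuring separation in the quotient $Y$ cannot work, because the quotient map collapses precisely the sets whose size you are trying to bound from below.

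The missing idea, which is how the paper closes the argument, is to choose the interlacing \emph{after} $h_\eps$ has acted and to use the universal quantifier over interlacings to pull it back. One first observes that $h_\eps$ must be close to the identity outside a deep stage $T_s$ (else condition (\ref{item:BSC-1}) of \cref{theorem:bing-shrinking-criterion2} fails), so $h_\eps((T_{m-1})_r)$ is a solid torus near $(T_{m-1})_r$; one then chooses a $k$-interlacing $A\cup B$ of $h_\eps((T_{m-1})_r)$ with $d_{S^3}(A,B)>\eps$. Now $h_\eps^{-1}(A\cup B)$ is a $k$-interlacing of $(T_{m-1})_r$, and it is to \emph{this} interlacing that the hypothesis is applied: for every $s\geq m$ some component of $T_s$ meets both $h_\eps^{-1}(A)$ and $h_\eps^{-1}(B)$, so its image under $h_\eps$ meets both $A$ and $B$. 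Passing to the infinite intersection (your K\"onig's-lemma step does this cleanly) yields a decomposition element $\Delta$ with $h_\eps(\Delta)$ meeting both $A$ and $B$, whence $\diam_{S^3}(h_\eps(\Delta))>\eps$, contradicting condition (\ref{item:BSC-2}) directly in $S^3$ with no appeal to the quotient metric. Your promotion of the diameter bound from decomposition elements to components of some finite stage $T_s$ is correct and is a useful elaboration of a step the paper treats briefly, but without the pull-back of the interlacing the proof does not go through.
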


It suffices to have the hypothesis hold for some $r$, but due to the symmetry of the construction of our decompositions, if the hypothesis holds for one $r$ then it holds for all $r$.

\cref{lemma:intersects-A-and-B-implies-no-BSC} follows the strategy employed by Bing~\cite{Bing-62-non-shrinking} and others after him e.g.\ \cite{Sher-67}, \cite{Daverman-86}, \cite{Ancel-Starbird-1989}, \cite{Wright-Bing-Whitehead-1989}.  For the convenience of the reader we provide a proof.

\begin{proof}
We need to show that the Bing shrinking criterion (\cref{theorem:bing-shrinking-criterion2}) does not hold.   Assume, for a contradiction, that it does.  That is, assume the existence of homeomorphisms $h_{\eps}$ with the required properties, for all $\eps$.  Note that $h_{\eps}$ moves points outside of $T_0$ by at most $\delta(\epsilon)$, with $\lim_{\epsilon\to 0}\delta(\epsilon)=0$. Otherwise $h_\epsilon$ would move points too far in the quotient space for the first condition of the Bing shrinking criterion to hold.  Indeed, there is an integer $s$, depending on $\eps$ and tending to infinity as $\eps \to 0$, such that $h_{\eps}$ must be arbitrary close to the identity outside $T_s$ in order to satisfy \cref{theorem:bing-shrinking-criterion2}~(\ref{item:BSC-1}).  We restrict our interest to $\eps$ small enough so that $s \geq m$. In particular, the image $h_\epsilon(T_{m-1})_r$ of $(T_{m-1})_r$ will not shrink as $\epsilon$ goes to zero but will be close to $(T_{m-1})_r$ itself.

We then choose a $k$-interlacing of meridional discs $A \cup B$ in the solid torus $h_\epsilon(T_{m-1})_r$, after the putative homeomorphism $h_{\eps}$ has acted on $S^3$.  The discs $A$ and $B$ must chosen to be sufficiently far apart, so that $d_{S^3}(A,B) > \eps$.  For sufficiently small $\eps$, this is always possible; we only require a contradiction for suitably small values of $\eps$.  We now have two reasons to restrict to small values of $\eps$.  The separation of $A$ and $B$ will imply the existence of a decomposition element which has large diameter, as we now explain.

Note that $h_{\eps}^{-1}(A \cup B)$ is a $k$-interlacing of $(T_{m-1})_r$.  By hypothesis, for every $s\geq m$ there is a component $(T_s)_r$ which intersects both $h_{\eps}^{-1}(A)$ and $h_{\eps}^{-1}(B)$.  Therefore $h_{\eps}((T_s)_r)$ intersects both $A$ and $B$.  By passing to the infinite intersection there must be a decomposition element which intersects both of the subsets $A$ and $B$ in the $k$-interlacing.  As the collections of discs $A$ and $B$ are far apart (their distance apart is bounded below by some $\eps$), that element has diameter at least $\eps$, which contradicts the assumptions on $h_{\eps}$.

We have shown that for sufficiently small $\eps$ there does not exist a homeomorphism $h_{\eps} \colon S^3 \to S^3$ which satisfies the conditions of the Bing shrinking criterion (Theorem~\ref{theorem:bing-shrinking-criterion2}) with respect to $f=q \colon S^3 \to S^3/\mathcal{D}$, and thus that $\mathcal{D}$ is not shrinkable.
\end{proof}

Using this we are now able to prove \cref{thmA}.

\begin{proof}[Proof of \cref{thmA}]

For the only if direction suppose there exist $k,m$ such that $b_s:=(D_{L^s}\circ\ldots\circ D_{L^m})(k)$ is positive for all $s\geq m$. Let $A,B$ be a $k$-interlacing of $(T_{m-1})_r$.

For all $s$, we can perturb $A$ and $B$ keeping them away from any component $(T_s)_r$ they did not intersect before and such that they intersect the boundary of all $T_s$ transversely and only in meridians.  We refer to \cite[Theorem~3]{Bing-62-non-shrinking} and \cite[Proof~of~Lemma~3]{Ancel-Starbird-1989}, where an innermost disc argument is used to discount intersections of $\partial T_s$ with~$A$ and $B$ which are inessential curves in $\partial T_s$ and a small ambient isotopy is used to remove longitudinal intersections.  By~\cite[Theorem~1]{Bing-62-non-shrinking} all intersections are either inessential, a meridian or a longitude.  In general these operations require moving a given $A$ and $B$; first remove inessential and longitudinal intersections of $A$ and $B$ with the tori $T_{m}$, and then proceed inductively.

For the proof of nonshrinking we use the definition of the disc replicating functions $D_{L^i}$ via \cref{defn:lower-interlacing-number}.  By the property of the $D_{L^i}$, at each stage $s\geq m$ of the defining sequence there is always at least one solid torus $(T_s)_{r'}$ for which the intersections with $A$ and $B$ form a $b_s$-interlacing.
In the inductive procedure here, at each stage of the application of the property of disc replicating functions we forget any component of $(A \cup B) \cap (T_s)_{r'}$ which is not a meridional disc.

Since $b_s$ is always positive the assumptions of \cref{lemma:intersects-A-and-B-implies-no-BSC} are satisfied (the assumptions of this lemma are also satisfied for the old interlacing i.e.\ the interlacing before perturbation, since our perturbations did not create any new intersections). Then by \cref{lemma:intersects-A-and-B-implies-no-BSC}, $\mathcal{D}$ does not shrink.\\

Now assume that
\[\lim_{p\to \infty}(D_{L^p}\circ\ldots\circ D_{L^m})(k)=0\]
for all $m, k \in \bbN$. We need to show that the Bing shrinking criterion (\cref{theorem:bing-shrinking-criterion2}) holds.  Let $\eps >0$.  As in Bing's original argument~\cite{Bing-shrinking-bing-doubles}, by going sufficiently deep into the defining sequence, we only need to measure diameter along the $S^1$ direction of the solid torus $T_0$.  Also go sufficiently deep in the defining sequence, to a collection of tori $T_s$, so that as long as we apply a homeomorphism of $S^3$ which is the identity outside of $T_s$, we will always satisfy Theorem~\ref{theorem:bing-shrinking-criterion2}~(\ref{item:BSC-1}).

Look at the collection of tori $T_s\subseteq T_0$. For $k$ large enough we can find meridional $k$-interlacings $A_r,B_r$ for each component $(T_s)_r$ such that each component of $(T_s)_r\setminus (A_r\cup B_r)$ has diameter less than $\eps/2$, measured longitudinally in $T_0$.

For the proof of shrinking we use the definition of the disc replicating functions $D_{L^i}$ via \cref{defn:upper-interlacing-no}.
For a link $L^i$, we may also regard $D_{L^i}(k)$ as giving the minimal integer such that there exists a link $L'$ ambient isotopic to $L^i$ and a regular neighbourhood $\cl(\nu L')$ which intersects a given meridional $k$-interlacing only in meridional discs, and for which all components of $\cl(\nu L')$ have at most a $D_{L^i}(k)$-interlacing arising from their intersections with $A_r,B_r$.  Such an ambient isotopy of $L^i$ determines a homeomorphism of $S^3 \sm \nu L^i_0$ which fixes the boundary, and maps a given regular neighbourhood $\nu L^i \sm L^i_0$ to $\nu L' \sm L'_0$.  Each  connected component of $T_s$ is identified with $S^3 \sm \nu L^s_0$.

Apply, to each connected component of $T_s$, the homeomorphism which the defining property of the disc replicating function $D_{L^s}$ gives to us.  This homeomorphism arranges the components of $T_{s+1}$ so that each of them has a meridional $q$-interlacing for some $q \leq D_{L^{s+1}}(k)$.  Then apply the homeomorphism given to us by the defining property of the disc replicating function $D_{L^{s+2}}$ to the connected components of $T_{s+2}$, and so on.  That is, apply the homeomorphism arising from $D_{L^{s+p}}$ to the connected components of $T_{s+p}$.

Since the sequence $b_p:=(D_{L^{p+s}}\circ\ldots\circ D_{L^{s+1}})(k)$ contains only nonnegative integers, note that converging to $0$ is equivalent to ending with infinitely many zeros.  Thus after finitely many steps we will have a homeomorphism of $S^3$ such that every component of $T_{s'}$, for some $s'$, has a $0$-interlacing from its intersections with $A \cup B$.  Thus each component of $T_{s'}$ intersects at most one of $A$ and $B$, and therefore has diameter less than $\eps$.  Passing to the infinite intersection, the decomposition elements have therefore also been arranged to all have diameter less than $\eps$, so~(\ref{item:BSC-2}) of the Bing shrinking criterion of Theorem~\ref{theorem:bing-shrinking-criterion2} is also satisfied.

\end{proof}

\begin{rem}\label{Remark:not-homeo-to-S3}
If $\mathcal{D}$ shrinks, then $S^3/\mathcal{D}$ is homeomorphic to $S^3$.
As remarked in the introduction, the converse also holds. This was pointed out by Sher~\cite[Preamble~to~Theorem~4]{Sher-67}.  Suppose $\mathcal{D}$ does not shrink.  The decompositions which we consider are \emph{monotone} (that is, the decomposition elements are compact continua), and the image of the nondegenerate elements of~$\mathcal{D}$ under the quotient map $S^3 \to S^3 /\mathcal{D}$ is a compact 0-dimensional set, since it is a subset of some Cantor set.  Therefore by \cite[Theorems~3~and~9]{Armentrout-66} the quotient $S^3/\mathcal{D}$ is not homeomorphic to $S^3$.  We use \cite[Theorem~9]{Armentrout-66} to show that the hypothesis of~\cite[Theorem~3]{Armentrout-66} that the decomposition is \emph{point-like} holds, given that it is monotone, the image of the nondegenerate elements of~$\mathcal{D}$ is a compact 0-dimensional set and the decomposition is definable by 3-cells with handles. A decomposition is \emph{point-like} if the complement of each decomposition element is homeomorphic to $S^3\setminus\{\text{point}\}$.  Armentrout's Theorem~3 says that a point-like decomposition satisfying the assumptions above whose quotient space is homeomorphic to $S^3$ would satisfy the Bing Shrinking Criterion.
\end{rem}

\section{Computable lower bounds via Milnor invariants}
\label{S:milnor}

To show that a decomposition is shrinkable or nonshrinkable, it often suffices to have a sufficiently strong upper or lower bound respectively, for the disc replicating functions $D_L$. For convenience we make the following definition.

\begin{defi}[Upper and lower disc replicating functions]\label{defn:greater-and-lesser-drf}
  We say that a function $f_L \colon \N_0 \to \N_0$ is a \emph{lower disc replicating function} for a link $L$ if $f_L(k) \leq D_L(k)$ for all $k$.
  Similarly we say that a function $g_L \colon \N_0 \to \N_0$ is an \emph{upper disc replicating function} for a link $L$ if $g_L(k) \geq D_L(k)$ for all $k$.
\end{defi}
To construct lower disc replicating functions we will use Milnor invariants.

\subsection{Background on Milnor invariants}\label{subsection:milnor-inv2}

J.~Milnor defined his $\ol{\mu}$-invariants in~\cite{Mil57}.  These (residue classes of) integers $\ol{\mu}_{I}(L)$ are ambient isotopy invariants which are associated to an $n$-component oriented link $L$ and a multi-index~$I$.  For a given $I$, $\ol{\mu}_{I}(L)$ measures the non-triviality of the longitudes of $L$ in a certain lower central series quotient of the link group.  The depth in the lower central series corresponds to the length of $I$.  See e.g.\ \cite{Cochran-derivatives-memoir-90} for a comprehensive study of Milnor invariants.

For the convenience of the reader we now briefly recall the definition of Milnor invariants.  The ensuing exposition follows~\cite[Pages~289--92]{Mil57}. The fundamental group $\pi_1(S^3\setminus\nu L)$ of the link complement is normally generated by choices of meridians $m_1,\dots,m_n$ of the link components. Let $x_1,\dots,x_n$ denote generators of the free group $F$ on~$n$ generators and define $\rho:F\to\pi_1(S^3\setminus \nu L)$ by sending $x_i$ to $m_i$. Let $\lambda_i$ be the zero framed longitude of the component $L_i$ and let $w_i$ be a word in the $x_i$ such that $\rho(w_i) = \lambda_i$.

The beginning of the construction of Milnor invariants is the following theorem.  For a group $G$ we denote its $q$th lower central subgroup by $G_q$; recall that $G_1:=G$ and $G_{q+1}:=[G_q,G]$ for $q \geq 1$.

\begin{thm}[\cite{Mil57} Theorem~4]
The nilpotent quotients of the fundamental group of the exterior of an $n$-component oriented link $L \subset S^3$ are such that:
  \[\pi_1(S^3 \setminus \nu L)/\pi_1(S^3 \setminus \nu L)_q \cong \ll x_1,\dots,x_n\,|\, [x_1,w_1],\dots,[x_n,w_n], F_q\rr.\]
\end{thm}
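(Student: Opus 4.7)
The plan is to follow Milnor's original strategy, which combines a Wirtinger presentation of the link group with an inductive analysis of its lower central series. First I fix a diagram of $L$ and the associated Wirtinger presentation of $G := \pi_1(S^3 \sm \nu L)$, with one generator $y_j$ per arc and one relation $y_k y_j y_k^{-1} = y_{j'}$ per crossing. For each component $L_i$ I designate one Wirtinger generator $m_i$ and choose a word $w_i \in F$ with $\rho(w_i) = \lambda_i$, where $\rho \colon F \to G$ sends $x_i \mapsto m_i$. Because $m_i$ and $\lambda_i$ together generate the peripheral torus $\pi_1(\partial \nu L_i) \cong \bbZ^2$, they commute in $G$, so $[x_i, w_i] \in \ker \rho$, and since $\rho(F_q) \subseteq \rho(F)_q \subseteq G_q$, the map $\rho$ descends to a homomorphism
\[
\Phi_q \colon \ll x_1, \ldots, x_n \mid [x_1, w_1], \ldots, [x_n, w_n], F_q \rr \longrightarrow G/G_q.
\]

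Next I would prove surjectivity of $\Phi_q$ by induction on $q$. The base case $q = 2$ follows from Alexander duality, which gives $H_1(S^3 \sm \nu L) \cong \bbZ^n$ generated by the meridians. For the inductive step, any Wirtinger generator $y_j$ is conjugate in $G$ to some $m_i$, say $y_j = g m_i g^{-1}$; expanding $g$ in the $m_k$ modulo $G_{q-1}$ (by inductive hypothesis) and using the standard identity $[a,b] \in G_{r+s}$ for $a \in G_r$, $b \in G_s$ lets me write $y_j$ itself in terms of $m_1, \ldots, m_n$ modulo $G_q$.

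Injectivity is the heart of the argument and the main obstacle. The claim is that $\ker \rho$ lies, modulo $F_q$, in the normal closure of $[x_1, w_1], \ldots, [x_n, w_n]$. One route is a second induction on $q$: assuming the theorem for $q - 1$, any $v \in \ker \Phi_q$ can be modified by longitudinal relators to lie in $F_{q-1}$, and one then compares its Magnus expansion with the images of the Wirtinger relations in $G_{q-1}/G_q$. A cleaner modern alternative is to appeal to Stallings' theorem on lower central quotients: the map $\Phi_q$ already induces an isomorphism on $H_1$ by the surjectivity step, and one verifies that it also induces a surjection on $H_2$ by comparing the $2$-complex of the Wirtinger presentation with that of the meridian-longitude one, whereupon Stallings yields isomorphisms on all nilpotent quotients. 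The bulk of the work lies in the $H_2$-surjectivity, amounting to showing that every Wirtinger $2$-cell relation can, modulo commutators of sufficiently high weight, be expressed via the longitudinal relators together with the boundary $2$-cells of the peripheral tori; this is precisely the combinatorial content that Milnor verifies directly by hand in his original proof.
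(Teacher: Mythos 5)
The paper offers no proof of this statement: it is quoted as Theorem~4 of Milnor's paper and used as a black box in Section~4, so the only comparison available is with Milnor's original argument. Your first route reproduces that argument's skeleton faithfully. Well-definedness of $\Phi_q$ from $[m_i,\lambda_i]=1$ on the peripheral torus together with $\rho(F_q)\subseteq G_q$, and the surjectivity induction (each Wirtinger generator is $g m_i g^{-1}$; write $g=hc$ with $h$ a word in the meridians and $c\in G_{q-1}$, so that $y_j\equiv h m_i h^{-1}$ mod $G_q$ by $[G_{q-1},G_1]\subseteq G_q$) are exactly Milnor's steps. Where your sketch is loosest is injectivity, which is indeed the heart of the matter: Milnor's argument there is a direct combinatorial reduction showing that the Wirtinger relators of the $i$th component, conjugated back to the base meridian, collapse modulo $F_q$ to the single relator $[x_i,w_i]$; the Magnus expansion plays no role at that stage (it enters only afterwards, to define the $\ol{\mu}$-invariants), so your proposed ``comparison of Magnus expansions'' is not Milnor's mechanism and would need to be worked out from scratch.

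Your Stallings alternative is a genuinely different and correct route, and is the one most modern treatments prefer, but as stated it is aimed at the wrong map. Apply Stallings to $P:=\ll x_1,\dots,x_n\mid [x_1,w_1],\dots,[x_n,w_n]\rr\to G$ rather than to $\Phi_q$: then $H_1(P)\to H_1(G)$ is an isomorphism (both are $\bbZ^n$ on the meridians, the relators of $P$ being commutators), and $H_2(P)\to H_2(G)$ is onto because Hopf's formula exhibits $H_2(G)$ as generated by the images of the peripheral tori, and each torus class is carried by the homomorphism $\bbZ^2\to P$ sending the two generators to $x_i$ and $w_i$, which exists precisely because of the relator $[x_i,w_i]$. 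Stallings then gives $P/P_q\cong G/G_q$ for all $q$, and $P/P_q$ is the presented group in the statement because the image of $F_q$ under $F\twoheadrightarrow P$ is $P_q$. Applying Stallings to $\Phi_q$ itself, as you wrote it, would force you to control $H_2$ of the nilpotent quotients, which is much larger than $H_2(P)$ and is not generated by the peripheral classes alone; this is the one step of your outline that, taken literally, would not go through.
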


This means that if the longitudes of the link lie in $F_{q-1}$, i.e.\ $w_i \in F_{q-1}$ for all $i$, then the link group has the same $q$th lower central series quotient as the free group.  Non-vanishing Milnor invariants associated to $I$ of length $q$ measure the failure of the zero framed longitudes to lie in $F_q$.

The \emph{Magnus expansion} of $w_i$ is obtained by substituting $$x_j = 1+\kappa_j \text{ and } x_j^{-1} = \sum_{\ell=0}^{\infty} (-1)^{\ell}\kappa_j^{\ell}.$$
Multiplying out, $w_i$ determines a formal power series in non-commuting variables $\kappa_1,\dots,\kappa_n$.  Let $\mu_{j_1\dots j_s i}(L)$ denote the coefficient of $\kappa_{j_1}\dots \kappa_{j_s}$, so that:
\[w_i = 1+ \sum \mu_{j_1\dots j_s i}(L)\, \kappa_{j_1}\dots \kappa_{j_s}.\]
Equivalently, in terms of the Fox differential calculus: \[\mu_{j_1\dots j_s i}(L) = \phi \left(\frac{\partial^s w_i}{\partial x_{j_1} \dots \partial x_{j_s}} \right),\]
where $\phi \colon \Z[F] \to \Z$ is the augmentation homomorphism.

Let $\Delta_{i_1\dots i_r}(L)$ denote the greatest common divisor of all integers of the form $\mu_{j_1\dots j_p}(L)$, where $2 \leq p < r$, and where $j_1\dots j_p$ ranges over all multi-indices obtained by deleting one or more of the indices from $i_1\dots i_r$ and permuting those which remain cyclically.

Let $\ol{\mu}_{i_1\dots i_r}(L)$ denote the residue class of $\mu_{i_1\dots i_r}(L)$ modulo $\Delta_{i_1\dots i_r}(L)$.

\begin{thm}[\cite{Mil57} Theorem~5]
  For $r \leq q$, the residue classes $\ol{\mu}_{i_1\dots i_r}(L) \in \Z_{\Delta_{i_1\dots i_r}(L)}$ are ambient isotopy invariants of $L$.  
\end{thm}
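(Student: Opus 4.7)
The plan is to verify that, although $\mu_{i_1\dots i_r}(L)$ depends on several auxiliary choices, its reduction modulo $\Delta_{i_1\dots i_r}(L)$ is independent of those choices; ambient isotopy invariance will then be automatic since an ambient isotopy of~$L$ induces a homeomorphism of complements carrying meridians to meridians and zero framed longitudes to zero framed longitudes. The two sources of ambiguity to track are (i)~the preimage word $w_i \in F$ of~$\lambda_i$, which by Theorem~4 of the excerpt is determined only modulo the normal closure of the relators $\{[x_j,w_j]\}_{j=1}^n$ together with $F_q$, and (ii)~the choice of meridian generators~$m_i$, well-defined only up to conjugation in the link group.

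First I would recall that the Magnus expansion extends to an injective ring homomorphism $\bbZ[F] \hookrightarrow \bbZ\langle\langle \kappa_1,\dots,\kappa_n\rangle\rangle$, so that for any specific word~$w$ the coefficients $\mu_I(w)$ are unambiguously defined. Since elements of $F_q$ have Magnus expansion lying in the $q$-th power of the augmentation ideal, replacing $w_i$ by a representative differing by an element of $F_q$ does not affect $\mu_{i_1\dots i_r}$ for $r < q$, and the boundary case $r = q$ will be absorbed by the relator analysis below. The heart of the argument is the computation of the Magnus expansion of a conjugate $v[x_j,w_j]^{\pm 1} v^{-1}$: writing $w_j = 1 + W_j$ with $W_j$ in the augmentation ideal, a direct calculation expresses the coefficient of each monomial $\kappa_{i_1}\cdots \kappa_{i_r}$ in $[x_j,w_j]$ as a $\bbZ$-linear combination of coefficients $\mu_{k_1\dots k_p}(w_j)$ with $p < r$, and conjugation by~$v$ further cyclically rotates the surviving indices. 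Consequently, altering $w_i$ within its ambiguity class changes $\mu_{i_1\dots i_r}(L)$ by an integer in the subgroup of~$\bbZ$ generated by all $\mu_{j_1\dots j_p}(L)$ obtained from $i_1\dots i_r$ by deleting entries and permuting the survivors cyclically; this is exactly $\Delta_{i_1\dots i_r}(L)\cdot\bbZ$.

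The meridian ambiguity is handled analogously: replacing $x_i$ by $g x_i g^{-1}$ in the Magnus expansion cyclically rotates the indices in the relevant coefficients modulo strictly higher-order monomials, which is again absorbed by $\Delta$. The main obstacle will be the combinatorial identity for the Magnus expansion of $[x_j,w_j]$ and its conjugates: one must carefully track which shorter multi-indices can appear as coefficients and verify that all of them are obtainable by deletions and cyclic permutations of $i_1\dots i_r$. Once this identity is in hand, the remaining steps are systematic bookkeeping, and the resulting residue class $\ol{\mu}_{i_1\dots i_r}(L) \in \bbZ/\Delta_{i_1\dots i_r}(L)\bbZ$ depends only on the pair $(\pi_1(S^3\setminus\nu L), \{m_i,\lambda_i\})$ up to the analyzed ambiguities, yielding ambient isotopy invariance as noted above.
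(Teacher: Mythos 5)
The paper does not prove this statement: it is quoted verbatim from Milnor's 1957 paper and used as a black box, so there is no internal proof to compare against. Measured against Milnor's actual argument, your outline is essentially faithful: you correctly identify the two sources of indeterminacy (the word $w_i$ being defined only modulo the normal closure of the relators $[x_j,w_j]$ together with $F_q$, and the meridians being defined only up to conjugation), you correctly invoke the injectivity of the Magnus embedding and the fact that $F_q$ maps into $1$ plus the $q$-th power of the augmentation ideal, and you correctly reduce everything to a combinatorial statement about the expansion of conjugated commutator relators. One small indexing remark: since $\mu_{i_1\dots i_r}$ is the coefficient of a monomial of degree $r-1$, the $F_q$-ambiguity is harmless for all $r\leq q$, not just $r<q$, so no ``boundary case'' needs to be deferred to the relator analysis.

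The genuine gap is the one you yourself flag: the claim that inserting $v[x_j,w_j]^{\pm1}v^{-1}$ changes $\mu_{i_1\dots i_r}(L)$ only by integer combinations of $\mu_{j_1\dots j_p}(L)$ with $(j_1\dots j_p)$ obtained from $(i_1\dots i_r)$ by deleting indices and cyclically permuting the remainder. That identity \emph{is} the proof of Milnor's Theorem~5; everything else is routine. In particular, the cyclic permutations do not simply fall out of ``conjugation by $v$ rotating indices'' --- conjugation contributes the same leading term plus higher-order corrections, and the cyclic symmetry enters through Milnor's separate analysis of how the coefficients of $w_j$ (indices ending in $j$) recombine with the factor $\kappa_j$ coming from $[x_j,\,\cdot\,]$, together with the cyclic symmetry relations among the $\mu$'s themselves. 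As a plan your proposal is sound and matches Milnor's route, but it is a reduction to the key lemma rather than a proof of it; to complete it you would need to carry out that expansion explicitly and verify that \emph{every} monomial arising from the relator terms is indexed by a deletion-plus-cyclic-permutation of $i_1\dots i_r$, which is precisely the bookkeeping occupying the bulk of Milnor's argument.
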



\subsection{Computable lower bounds}

In this section we define a lower disc replicating function $f_L \colon \N_0 \to \N_0$ (as in Definition~\ref{defn:greater-and-lesser-drf}), associated to an oriented link~$L$ with $L_0$ unknotted, which bounds the link's disc fertility from below.  More precisely, recall that $f_L$ should satisfy the property that if a component of a solid torus $T_s$ has a $k$-interlacing, and the next stage of the defining sequence is determined by $L$, then there is at least one component $L_j$ of $L\setminus L_0$ for which the intersections of the $k$-interlacing discs with $\cl(\nu L_j)$ give rise to an $h$-interlacing for the solid torus $\cl(\nu L_j)$, for some  $h \geq f_L(k)$.

Let $L=L_0 \,\sqcup\, L_1 \,\sqcup\, \dots \sqcup \, L_m$ be an $m$-component oriented link in $S^3$ where~$L_0$ is unknotted.  From this, we produce another link $J$ by some choice of the following sequence of operations.

\begin{enumerate}
  \item \label{item:move1} Take the $d$-fold branched cover of $S^3$ with branching set $L_0$, and let $\wt{J}$ be the pre-image of $L$, where $\wt{J}_0$ is the pre-image of $L_0$.  This is again a link in $S^3$. \\
      \item \label{item:move2} Take a sublink $\widehat{J}$ of $\wt{J}$ which includes $\wt{J}_0$ as $\widehat{J}_0$.\\
     \item \label{item:move3} Blow down (perform $\pm 1$ Dehn surgery) along $\ell$ unknotted components of $\widehat{J} \setminus \widehat{J}_0$, each of which lies in an open $3$-ball in $S^3 \setminus \nu \widehat{J}_0$.  Call the resulting link $J$, with $\widehat{J}_0$ becoming $J_0$.
\end{enumerate}

For $k = 0$ define $f_L^J(k) =0$.  If $J$ is such that no multi-index $I$ exists which contains at least one zero and for which the corresponding Milnor invariant is nonzero, then we define:
\[f_L^J(k) := 0\]
for all $k \in \mathbb{N}$.

Now suppose that a link $J$ can be produced from $L$ with $\ol{\mu}_I(J) \neq 0$ for some multi-index $I$ which contains at least one $0$.  If there is such a multi-index with $|I|=2$, i.e.\ $I=(0j)$ or $I=(j0)$ for some $j$, then we define:
\[f_L^J(k) := \ol{\mu}_I(J)k\]
for all $k \in \N$.  Let $n+1$ be the number of components of $J=J_0 \,\sqcup\, J_1 \,\sqcup \dots \sqcup \, J_n$.  If $|I| > 2$, then we define:
\[f_L^J(k) := \left\lceil\frac{2dk}{n+\ell}\right\rceil - 1\]
for all $k \in \N$.  Finally define the function:
\[f_L(k) := \max\{f_L^J(k)\, | \, J\text{ reached from }L \text{ by operations (\ref{item:move1}), (\ref{item:move2}) and (\ref{item:move3})}\}.\]

\begin{rem}\label{remark:optimal-functions}
  In practice, it is usually not necessary to find the function $f_L$ precisely, only to find a $J$ which gives a sufficiently large lower bound.  Even if we did find $f_L$ precisely, it still may not equal $D_L$.  Nevertheless, this will not overly concern us since if a sequence of integers defined using functions $f_L$ never reaches zero, then neither does a sequence defining using the functions $D_L$.

  Having said that we actually will be able to determine $D_L$ precisely for $(n,m)$-links in \cref{S:examples}.
\end{rem}

Now we show that the functions $f_L$ defined above are indeed lower disc replicating functions.

\begin{thm}[Lower disc replicating functions]\label{theorem:disc-replicating-functions}
  Suppose $T = S^3 \setminus \nu L_0$ has a meridional $k$-interlacing $A,B$.  Then for any link $L'$ which is ambient isotopic to $L$ and such that $\cl(\nu L')$ intersects $A,B$ only in meridional discs, there is a component $L'_j$ of $L'$ such that $\cl(\nu L'_j)$ has an $h$-interlacing arising from its intersections with the $k$-interlacing for $T$, for some $h \geq f_L(k)$.
\end{thm}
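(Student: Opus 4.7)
The plan is to trace the data through the three operations defining $J$ from $L$, and then extract the lower bound from the Milnor invariant of $J$. First, I would lift the $k$-interlacing $(A,B)$ of $T$ to the $d$-fold cyclic branched cover $\pi \colon \widetilde{S^3} \to S^3$ over $L'_0$. Since $L'_0$ is unknotted, $\widetilde{S^3} \cong S^3$; the pre-image $\widetilde{T} := \pi^{-1}(T)$ is a solid torus; and $(\widetilde{A}, \widetilde{B}) := (\pi^{-1}(A), \pi^{-1}(B))$ is a meridional $dk$-interlacing of $\widetilde{T}$. The pre-image $\widetilde{L'}$ is ambient isotopic to $\widetilde{L}$, and the meridional-disc hypothesis on $L'$ lifts to $\cl(\nu \widetilde{L'})$ meeting $\widetilde{A} \cup \widetilde{B}$ only in meridional discs of $\cl(\nu \widetilde{L'})$.

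Next, I would select the sublink $\widehat{J}' \subset \widetilde{L'}$ ambient isotopic to $\widehat{J}$ and perform the $\pm 1$ Dehn surgeries on the $\ell$ unknotted components $C_1,\dots,C_\ell$ of $\widehat{J}' \setminus \widehat{J}'_0$ lying in 3-balls disjoint from $\widehat{J}'_0$. Outside these 3-balls the discs $\widetilde{A}, \widetilde{B}$ are unchanged; inside, each meridional-disc intersection with a $C_i$ is replaced by a Seifert-framed tube along $C_i$. The resulting link $J' \subset S^3$ is ambient isotopic to $J$, and the surfaces $(A', B')$ on $T_{J'} := S^3 \setminus \nu J'_0$ still realize a $dk$-interlacing structure.

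Finally, I apply the Milnor hypothesis. When $|I|=2$, say $I=(0,j)$, the invariant $\bar\mu_I(J)=\mathrm{lk}(J_0,J_j)$ is preserved by the ambient isotopy and by the blow-down, and equals the winding number in $\widetilde{T}$ of the component $\widehat{J}'_j$. Projecting down to $T$, the associated component $L'_{j''} \subset T$ then has linking number with $L'_0$ of magnitude at least $|\bar\mu_I(J)|$, and a cyclic-alternating argument (a curve of winding $M$ in $T$ meets the $k$-interlacing discs in a cyclic sequence that contains an alternating subword of length $2|M|k$, since back-tracking can only insert consecutive duplicates) produces an interlacing of at least $|\bar\mu_I(J)|\cdot k = f_L^J(k)$ on $\cl(\nu L'_{j''})$. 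When $|I|>2$, non-vanishing of $\bar\mu_I(J)$ obstructs any disentangling isotopy of the non-$\widehat{J}'_0$ components from $\widehat{J}'_0$, so one counts the geometric intersections of the $2dk$ discs of $(\widetilde{A},\widetilde{B})$ with the $n+\ell$ non-$\widehat{J}'_0$ components and applies pigeonhole: some component receives enough alternating $A$- and $B$-intersections to force an interlacing of at least $\lceil 2dk/(n+\ell)\rceil - 1 = f_L^J(k)$. The principal obstacle is the $|I|>2$ case: one must translate the algebraic non-vanishing of a higher Milnor invariant into a definite geometric intersection count with the interlacing discs, and control the tubing introduced by the surgery so that the count descends to an interlacing bound on the original $L'$.
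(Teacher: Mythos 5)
Your outline has the same skeleton as the paper's proof (lift the interlacing to a $dk$-interlacing of the $d$-fold cover $\wt T$, use the Milnor invariant of $J$ to force intersections, then pigeonhole over the $n+\ell$ components of $\widehat J\setminus\widehat J_0$), but the two steps you flag as ``the principal obstacle'' are exactly the content of the theorem, and they remain unproved in your write-up. First, the translation from $\ol\mu_I(J)\neq 0$ to a geometric intersection statement: the paper does \emph{not} argue that the Milnor invariant ``obstructs a disentangling isotopy'' in some general sense. The precise argument is a contradiction: if some meridional disc $C$ of $\wt A\cup\wt B$ missed all of $\widehat J'=\widehat J\setminus\widehat J_0$, then the $3$-balls containing the $\ell$ unknotted components to be blown down could be shrunk off $C$, the blow-downs would be realised by twists supported in those balls, and hence $J\setminus J_0$ could be ambiently isotoped off $C$; but then the longitude $\lambda_0=\partial C$ of $J_0$ would bound a disc in the complement of $J$, killing every $\ol\mu_I(J)$ with a $0$ in the multi-index. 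This also shows why your plan to push the interlacing discs \emph{through} the surgery by tubing them along the blown-down components is a wrong turn: the paper never modifies the discs, it works with $\widehat J'$ (before blow-down) throughout and invokes the blow-down only inside this contradiction.

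Second, your pigeonhole step conflates ``hits many discs'' with ``has a large interlacing.'' A solid torus can meet a disc in many meridional discs without those intersections contributing to an interlacing of the required size; what one actually needs is that some component meets at least $[kd/(n+\ell)]_{1/2}$ \emph{pairs} of consecutive discs (counting pairs, of which there are $kd$ in total, all of which are hit by the first step), and then a separate lemma converting this into a $\bigl(2[kd/(n+\ell)]_{1/2}-1\bigr)$-interlacing. The paper does this by passing to the infinite cyclic cover $T_\infty\cong\R\times D^2$, where the discs become separating parallel planes and the (winding-number-zero) component lifts to a compact solid torus; the height-one case of the Ancel--Starbird Technical Lemma then extracts $4r$ meridional discs in cyclic order along a curve $\gamma$ traversing the lifted torus, giving the $(2r-1)$-interlacing, which descends through $T_\infty\to\wt T\to T$. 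Your ``cyclic-alternating subword'' sketch is adequate for the $|I|=2$ (linking number) case, but for $|I|>2$ the passage from an intersection count to an interlacing is precisely what this covering-space argument supplies, and without it the bound $\lceil 2dk/(n+\ell)\rceil-1$ does not follow.
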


\begin{rem}\label{remark:explanation-of-thm-lower-drfs}
  \cref{theorem:disc-replicating-functions} implies that $f_L(k) \leq U_L(A,B) = U_L(k) = D_L(k)$, by \cref{defn:upper-interlacing-no}, \cref{lemma:independence} and \cref{lemma:interlacing-functions-coincide} respectively.
\end{rem}

\begin{proof}[Proof of \cref{theorem:disc-replicating-functions}]
First observe that it suffices to prove the result for a given choice of link $J$ obtained from $L$ by the operations (\ref{item:move1}), (\ref{item:move2}) and (\ref{item:move3}) above.
Since the Milnor invariants which define $f_L$ are ambient isotopy invariants, and since an ambient isotopy of $L$ induces ambient isotopies of $\widehat{J}$ and $J$, the conclusion also holds for any $L'$ ambient isotopic to $L$.

For the case that there is no multi-index $I$ for which $\ol{\mu}_I(J) \neq 0$, so that $f_L^J(k)=0$ for all $k$, the theorem is immediate.

Let $\wt{T}$ be the $d$-fold cover of the solid torus $T$ with covering map $\pi \colon \wt{T} \to T$.  A $k$-interlacing for $T$ lifts to a $kd$-interlacing for $\wt{T}$.  We will refer to the discs of this $kd$-interlacing as $\wt{A}= \wt{A}_1 \, \sqcup\dots\sqcup\, \wt{A}_{kd}$ and $\wt{B}= \wt{B}_1 \,\sqcup\dots\sqcup\, \wt{B}_{kd}$.  In this proof we forget about any extra discs and focus on a subset of the interlacing which is an interlacing collection of meridional discs (recall \cref{defn:interlacing-discs}).

In the case that $|I|=2$, so that we can assume $I=(0j)$ for some $j$, the definition of linking number implies that every meridional disc in the collections $\wt{A}$ and $\wt{B}$ must intersect $J_j$ at least $\ol{\mu}_{0j}(J)$ times all with the same intersection number.  Each intersection gives rise to an intersection with $\cl(\nu J_j)$ which by hypothesis must be a meridional disc.  Ignoring any other intersections, this translates to a $\ol{\mu}_{I}(J)kd$-interlacing, which at least descends to a $\ol{\mu}_{I}(J)k$-interlacing of $\cl(\pi(\nu J_j))$.  The function $f_L^J(k) = \ol{\mu}_I(J)k$, when the Milnor invariant has length $2$, will offer the sharpest possible bound when $d=1$.

We now turn to the case that $|I| >2$.  After performing operations (\ref{item:move1}) and~(\ref{item:move2}) we have a link $\widehat{J}$.  Not including $\widehat{J}_0$, we have $n+\ell$ components.

Define a function $[\,\cdot\,]_{1/2} \colon \R \to \frac{1}{2}\Z$ by $[x]_{1/2} := \frac{1}{2}\lceil 2x\rceil$.  The effect is to round up to the nearest half integer.

Our aim is to show that some component of $\widehat{J}\setminus \widehat{J}_0$ intersects at least $[kd/(n+\ell)]_{1/2}$ pairs of consecutive discs in the $kd$-interlacing of $\wt{T}$.  We have the following lemma, in which perhaps unsurprisingly we interpret half a pair of discs to mean a single disc.  We do not specify whether the extra disc is an $\wt{A}$ or a $\wt{B}$ disc.  We also regard a negative number of pairs of discs as zero discs.

Let $T$ be a solid torus and let $T_{\infty} \approx \R \times D^2$ be the infinite cyclic cover of $T$.  Let $S \subset T$ be an embedded solid torus and let $\wt{S}$ be its pre-image in $T_{\infty}$.

\begin{lemma}\label{lemma:consecutive-discs-ancel-starbird-technical}
  Suppose that $\wt{S}$ is such that $\partial \wt{S}$ has nonempty intersections with each disc in $r \in \frac{1}{2}\Z$ consecutive pairs of meridional discs $P$ for $T_{\infty}$, which are a subset of the pre-images of an $m$-interlacing collection of meridional discs for $T$, for some $m$.  Suppose that each component of $\wt{S} \cap P$ is a meridional disc of $\wt{S}$.  Moreover suppose that the winding number of $S$ in $T$ is zero.  Then the intersections of $S$ with the $m$-interlacing collection of meridional discs for $T$ give rise to an $n$-interlacing of $S$ for some $n \geq 2r-1$.
\end{lemma}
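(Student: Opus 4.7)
The plan is to reduce the problem to a single component of $\wt{S}$ and then invoke the Ancel-Starbird Technical Lemma.

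First I would exploit the winding-number-zero hypothesis: it implies that the restriction of the covering projection $\pi\colon T_\infty\to T$ to each connected component of $\wt{S}$ is a homeomorphism, so $\wt{S}=\bigsqcup_{\ell\in\bbZ}\wt{S}_\ell$ with each $\wt{S}_\ell$ a solid torus diffeomorphic to $S$. Fixing one component $\wt{S}_0$, the intersections of $S$ with the $m$-interlacing of $T$ correspond under $\pi$, bijectively and in the same cyclic order, to the intersections of $\wt{S}_0$ with the complete family of lifts of the interlacing discs in $T_\infty$.

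Second I would argue that $\wt{S}_0$ itself meets $2r$ consecutive lifted discs. Index the lifted discs monotonically by height in $T_\infty\approx\bbR\times D^2$, and let $I\subset\bbZ$ be the set of indices $j$ for which $\wt{S}_0$ meets the $j$-th lifted disc. Since $\wt{S}_0$ is connected its $\bbR$-projection is a connected interval, so $I$ is a set of consecutive integers. By $\bbZ$-periodicity, $\wt{S}$ meets the $j$-th lifted disc if and only if $j\in I+2m\bbZ$; by hypothesis this set contains $\{1,\dots,2r\}$. In the regime where the lemma is applied one has $2r\leq 2m$, so these $2r$ indices give $2r$ distinct consecutive residues modulo $2m$, and hence $|I|\geq 2r$. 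Let $P_1',\dots,P_{2r}'$ denote the resulting $2r$ consecutive lifted discs all met by $\wt{S}_0$.

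Third I would apply \cref{lemma:ancel-starbird-technical-lemma} to the solid torus $\wt{S}_0\subset T_\infty\approx\bbR\times D^2\subset\bbR^3$ and the parallel meridional discs $P_1',\dots,P_{2r}'$. The hypothesis that each component of $\wt{S}\cap P$ is a meridional disc of $\wt{S}$ controls $\partial\wt{S}_0\cap P_i'$ and supplies the required meridian condition, so the lemma produces pairwise disjoint meridional discs $A_1,\dots,A_{4r}$ of $\wt{S}_0$ in cyclic order with $A_i\cup A_{4r+1-i}\subset P_i'$. Because the $P_i'$ alternate type $A$ and $B$ with the height index, the cyclic type sequence on $\wt{S}_0$ reads
\[
\underbrace{A,B,A,B,\dots,A,B}_{2r},\ \underbrace{B,A,B,A,\dots,B,A}_{2r},
\]
alternating except at the two cyclic breaks between positions $2r,2r{+}1$ and (cyclically) between $4r,1$. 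Deleting $A_1$ and $A_{2r+1}$ removes both breaks and leaves a cyclically alternating sequence of $4r-2$ meridional discs, i.e.\ a $(2r-1)$-interlacing of $\wt{S}_0$. Pushing this forward along the homeomorphism $\pi|_{\wt{S}_0}\colon\wt{S}_0\to S$ yields the required $(2r-1)$-interlacing of $S$, with discs coming from $S\cap (T\text{'s }m\text{-interlacing})$.

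The main obstacle is the second step: upgrading the hypothesis that $\partial\wt{S}$ meets each of the $2r$ given discs to the stronger fact that a single component $\wt{S}_0$ meets $2r$ consecutive lifted discs. This upgrade uses the $\bbZ$-periodicity of both $\wt{S}$ and the lifted interlacing, the connectedness of the height-projection of $\wt{S}_0$, and the bound $2r\leq 2m$ that is valid in every application of the lemma.
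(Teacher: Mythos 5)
Your proof is correct and takes essentially the same route as the paper's: both arguments use the winding-number-zero hypothesis to get a solid-torus lift, identify $\Int(T_{\infty})$ with $\R^3$ so the relevant discs become parallel planes, invoke the height-one case of the Ancel--Starbird Technical Lemma (\cref{lemma:ancel-starbird-technical-lemma}) to extract $4r$ meridional discs in cyclic order, and then discard one disc at each of the two label breaks to obtain a $(2r-1)$-interlacing of $S$. Your second step, which isolates a single component $\wt{S}_0$ of the full preimage meeting $2r$ consecutive lifted discs, carefully addresses a point the paper glosses over by treating $\wt{S}$ as a single solid torus; the auxiliary bound $2r\leq 2m$ you need there is not among the stated hypotheses but, as you note, does hold in every application of the lemma.
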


\begin{proof}[Proof of Lemma~\ref{lemma:consecutive-discs-ancel-starbird-technical}]
Since $S$ has winding number zero the lift~$\wt{S}$ is again a solid torus in $T_{\infty}$.  Forgetting the $A$ and $B$ labels we denote the $r$ pairs of meridional discs by $P_1 \cup P_2 \cup \cdots \cup P_{2r}$.  The interior $\Int(T_{\infty})$ is homeomorphic to $\R^3$ in such a way that the discs $P$ are sent to disjoint parallel planes $\{p_i\} \times \R^2$, $p_1 < p_2 < \dots < p_{2r}$ which we also denote by $P_i$.  By hypothesis every intersection of $\wt{S}$ with a plane $P_i$ is a meridional disc of $\wt{S}$ and therefore can contribute to an interlacing of $S$.

The proof is now the same as the (straightforward) height one case of the Technical Lemma of Ancel and Starbird~\cite[Page~301]{Ancel-Starbird-1989}, stated as our \cref{lemma:ancel-starbird-technical-lemma}.  For the convenience of the reader we give it here.  Choose a simple closed curve $\gamma$ starting at a point $q$ on $\partial \wt{S}$ with first coordinate in $\R^3$ less than $p_1$, which intersects each component of $P_i \cap \partial \wt{S}$ transversely in a single point.  Let $u$ be a point on $\gamma$ with first coordinate in $\R^3$ greater than $p_{2r}$.  For $i=1,\dots,2r$, let $C_i$ be the meridional disc in $\wt{S} \cap P_i$ which one meets first while traversing $\gamma$ from $q$ to $u$ and let $C_{4r+1-i}$ be the meridional disc in $\wt{S} \cap P_i$ which one meets first while traversing $\gamma$ from $u$ to $q$.  The meridional discs $C_1,\dots, C_{4r}$ for $\wt{S}$, with their appropriate $A$ and $B$ labels reinstated, give rise to a $(2r-1)$-interlacing of $\wt{S}$ and therefore their images in $T$ give rise to a $(2r-1)$-interlacing for $S$.
\end{proof}

Continuing the proof of Theorem~\ref{theorem:disc-replicating-functions}, we make the following claim.

\begin{claim}
  Some component of $\widehat{J}' := \widehat{J}\setminus \widehat{J}_0$ intersects at least $[kd/(n+\ell)]_{1/2}$ pairs of consecutive discs in the $kd$-interlacing of $\wt{T}$.
\end{claim}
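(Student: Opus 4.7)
The plan is a pigeonhole argument on the $2kd$ discs of the $kd$-interlacing. Label these discs cyclically as $D_1,\dots,D_{2kd}$, alternating between the $A$- and $B$-families, and let $\pi\colon \wt T\to S^1$ be the longitudinal projection, sending each disc $D_i$ to a point $\pi(D_i)\in S^1$. Since each component $J'_j$ of $\widehat J'$ is connected, its image $\pi(J'_j)\subseteq S^1$ is a connected set --- either all of $S^1$ or a closed proper sub-arc --- and $J'_j$ meets $D_i$ precisely when $\pi(D_i)\in\pi(J'_j)$.

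If $\pi(J'_j)=S^1$ for some $j$, then $J'_j$ meets all $2kd$ discs, giving a run of all $2kd$ consecutive discs, which comfortably exceeds the required $\lceil 2kd/(n+\ell)\rceil$. Otherwise every $\pi(J'_j)$ is a proper sub-arc, and I would establish the \emph{coverage property}: $\bigcup_j \pi(J'_j)\supseteq\{\pi(D_1),\dots,\pi(D_{2kd})\}$, i.e.\ every disc of the $kd$-interlacing is met by at least one component of $\widehat J'$. Granting coverage, the $n+\ell$ arcs $\pi(J'_j)$ jointly cover all $2kd$ disc-points, so by pigeonhole some single arc $\pi(J'_j)$ contains at least $\lceil 2kd/(n+\ell)\rceil$ of them; since an arc of $S^1$ is convex, these points are consecutive. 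A run of $\lceil 2kd/(n+\ell)\rceil=2[kd/(n+\ell)]_{1/2}$ consecutive discs constitutes $[kd/(n+\ell)]_{1/2}$ consecutive pairs in the sense of the statement, yielding the claim.

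The main obstacle is the coverage property. I would prove it by contradiction: if some disc $D_i$ were disjoint from $\widehat J'$, then $\widehat J'$ would lie entirely in $\wt T\setminus D_i$, which embeds into a ball in $S^3$ disjoint from the core $\widehat J_0$. This would imply that $\widehat J_0\sqcup\widehat J'$ is a split link in $S^3$, and hence every Milnor invariant involving both the index $0$ and any index in $\{1,\dots,n+\ell\}$ would vanish, contradicting $\ol\mu_I(J)\neq 0$ with $0\in I$ and $|I|>2$. Making this non-splittability argument precise --- in particular tracking how it survives operation~(\ref{item:move2}) (passage to a sublink, which might have killed a relevant Milnor invariant upstream) and operation~(\ref{item:move3}) (blowing down unknotted components, which can change the link type) --- requires careful bookkeeping and is the delicate technical step of the proof.
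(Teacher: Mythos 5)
Your architecture --- first a coverage property (every disc of the $kd$-interlacing meets $\widehat J'$), then pigeonhole over the $n+\ell$ components --- is exactly the paper's, and your pigeonhole half is essentially sound: connectivity of each component forces the set of discs it meets to be a consecutive cyclic run, so an arc containing $\lceil 2kd/(n+\ell)\rceil = 2[kd/(n+\ell)]_{1/2}$ of the $2kd$ disc-points yields the required number of consecutive pairs. (The paper runs the same count in contrapositive form: if every component met at most $[kd/(n+\ell)]_{1/2}-\tfrac{1}{2}$ pairs, the union would meet fewer than $kd$ pairs in total, contradicting coverage.)

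The genuine gap is the step you explicitly flag and defer: why ``some disc $C$ of $\wt A\cup\wt B$ misses $\widehat J'$'' contradicts $\ol\mu_I(J)\neq 0$, given that the Milnor hypothesis lives on $J$ rather than on $\widehat J$. Your worry about operation~(\ref{item:move2}) is misplaced --- the implication only ever runs forward from $\widehat J$ to $J$, and the sublink was taken before $\widehat J$ was formed, so nothing needs to be transported back through that step. The worry about operation~(\ref{item:move3}) is the real issue, and the paper resolves it as follows. Each blown-down component $\widehat J'_p$ is unknotted inside a $3$-ball in $\wt T$, hence bounds a disc there, and that $3$-ball can be shrunk so as to miss every interlacing disc that $\widehat J'_p$ itself misses; since the blow-down is realised by twisting along this disc, the whole operation is supported away from $C$. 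Consequently, if $C$ misses all of $\widehat J'$, then after all $\ell$ blow-downs the link $J\setminus J_0$ still misses $C$ (up to ambient isotopy). Now $\partial C$ is a zero-framed longitude of $J_0$, so $\lambda_0$ bounds a disc in $S^3\setminus\nu J$ and is trivial in $\pi_1(S^3\setminus\nu J)/\pi_1(S^3\setminus\nu J)_q$, which forces every $\ol\mu_I(J)$ with $0\in I$ to vanish --- the desired contradiction. Without this localisation of the blow-downs (or an equivalent argument), the coverage property, and hence the claim, is unproven; this is precisely the ``careful bookkeeping'' you postponed, and it is the heart of the proof rather than a routine verification.
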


Assuming that the claim holds, the lift of that component to $T_{\infty}$ also intersects at least $[kd/(n+\ell)]_{1/2}$ pairs of discs.  Then Lemma~\ref{lemma:consecutive-discs-ancel-starbird-technical} implies that this component has at least a $2[kd/(n+\ell)]_{1/2}-1 = \lceil 2kd/(n+\ell) \rceil - 1 = f_L^J(k)$-interlacing, which is what we want to show.  Lifting the intersections to $T_{\infty}$ we see that the hypotheses of Lemma~\ref{lemma:consecutive-discs-ancel-starbird-technical} apply. Then observe that intersections of $\cl(\nu \widehat{J}')$ with $\wt{A}$ and $\wt{B}$ descend to similar intersections of $\cl(\pi(\nu \widehat{J}'))$ with $A$ and $B$; once we know that the requisite interlacing arises in $T_{\infty}$ we can deduce its existence in $\wt{T}$ and in $T$, since there is a tower of covering spaces $T_{\infty} \to \wt{T} \to T$.

Thus it remains to prove the claim that some component of $\widehat{J}' = \widehat{J}\setminus \widehat{J}_0$ intersects at least $[kd/(n+\ell)]_{1/2}$ pairs of consecutive discs in the $kd$-interlacing of $\wt{T}$.

Since each blown down component $\widehat{J}'_p$ is contained in a $3$-ball in $\wt{T}$ and is unknotted, it also bounds a disc in that $3$-ball.  This 3-ball can be shrunk so that it misses all discs $C$ of $\wt{A} \cup \wt{B}$ for which $\widehat{J}'_p \cap C = \emptyset$.
Blowing down along the component $\widehat{J}'_p$ can be realised by twisting along a disc whose boundary is $\widehat{J}'_p$.  Thus if a meridional disc of $\wt{T}$ misses $\widehat{J}'$ then there is an ambient isotopy of $J \setminus J_0$ such that after the ambient isotopy we have that $J \setminus J_0$ misses that meridional disc.  However, this is not possible, as the non-vanishing of a Milnor invariant $\ol{\mu}_I(J)$ with at least one zero in the multi-index implies that the longitude $\lambda_0$ of $J_0$ is nontrivial in $\pi_1(S^3 \setminus \nu J)/\pi_1(S^3 \setminus \nu J)_q$ for $q=|I|$.  Therefore each meridional disc of $\wt{A} \cup \wt{B}$ must intersect at least one component of $\widehat{J}'$.

So, we have an $(n+\ell)$-component link $\widehat{J}'$ in $\wt{T}$.  Suppose that each component hits fewer than $[kd/(n+\ell)]_{1/2}$ pairs of meridional discs of $\wt{T}$; that is, at most $[kd/(n+\ell)]_{1/2} - 1/2$.  We will show that this implies that fewer than $kd$ pairs of discs in total can be intersected by the link $\widehat{J}'$.  Then the contrapositive of this implication coupled with our knowledge from the previous paragraph that every disc is intersected, implies the claim.

To see that fewer than $kd$ pairs of discs are intersected, we note that:
\[\left[\frac{kd}{n+\ell}\right]_{1/2} - \frac{1}{2} < \frac{kd}{n+\ell} + \frac{1}{2} - \frac{1}{2} = \frac{kd}{n+\ell},\]
so that
\[(n+\ell)\left(\left[\frac{kd}{n+\ell}\right]_{1/2} - \frac{1}{2}\right)< (n+\ell)\frac{kd}{n+\ell} = kd.\]
The left hand side of the last inequality is the maximum number of pairs of meridional discs in the interlacing $\wt{A} \cup \wt{B}$ which $\widehat{J}'$ can intersect given the assumption that each component intersects fewer than $[kd/(n+\ell)]_{1/2}$ pairs of meridional discs.
This completes the proof of \cref{theorem:disc-replicating-functions}.
\end{proof}

\begin{rem}
  We point out that we could conceivably use some method other than blow downs and Milnor invariants to see that there exists a component of $\widehat{J}$ which intersects each meridional disc.  However this method lends itself nicely to geometric computation, as we will see in the next section. Moreover using Milnor invariants means that Theorem~\ref{thmA} can be applied to vast classes of examples, whereas previous results in the literature focused on certain special links.

  The examples in the next section constitute a large class of links, but we note that the class of links with nonvanishing Milnor invariants is of course much larger, and the lower disc replicating functions defined in this section can be applied to decompositions constructed using these as well.
\end{rem}

\section{Examples: $(n,m)$-links}
\label{S:examples}
 Let $n \ge 1$ and $m \ge 1$.  Suppose we have a chain of $n$ unknots in $T_0$, each of which links the next in the chain with linking number $\pm 1$, such that the last knot of the chain (in the world outside topology it would be called a `link' of the chain) also links the first with linking number $\pm 1$, and such that the whole chain travels around $T_0$ with winding number $m$.  There should be no additional entangling of the links in the chain.  If $n=1$ then the knot clasps itself after winding $m$ times around $T_0$.  Taking the union of the resulting link with a meridian of $T_0$, we obtain an \emph{$(n,m)$-link}.  See Figure~\ref{figure:nmlink} for a picture of a $(4,3)$-link.

Note that the single defining link of the Bing decomposition of~\cite{Bing-shrinking-bing-doubles} is a $(2,1)$-link, while Bing's example of a decomposition which does not shrink~\cite{Bing-62-non-shrinking},~\cite[Chapter~9,~Example~6]{Daverman-86},~\cite[Page~416]{Freedman-JDG-82} has a $(2,2)$-link as its defining link.

Let $L$ be an $(n,m)$-link.  The $m$-fold covering space of $T_0$ contains $m$ copies of a chain of length $n$ with winding number $1$ in $\wt{T}_0$, i.e.\ there is an $(n,1)$-link $\widehat{J}$ as a sublink of $\wt{J}$; see Figure~\ref{figure:nmlink2}.

\begin{figure}[H]
\includegraphics[width=6cm]{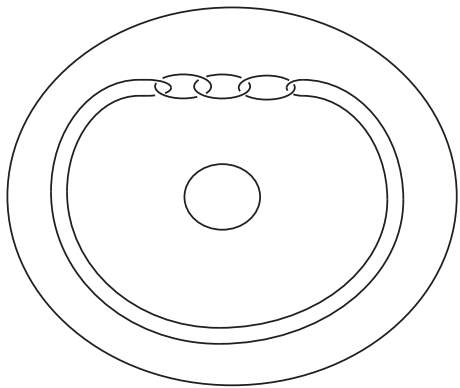}
\caption{A $(4,1)$-link as a sublink of the $3$-fold covering link $\wt{J}$ of the $(4,3)$-link from Figure~\ref{figure:nmlink}.}
\label{figure:nmlink2}
\end{figure}

Assume $n \geq 2$.  Perform $(n-2)$ blow downs on this link, to obtain the $2$-component link of the Bing decomposition inside the solid torus; see Figure~\ref{figure:nmlink3}.  The link $J$ is the 3-component link obtained by including a meridian of the solid torus.  Note that $J$ is the Borromean rings (or the Borromean rings with a clasp changed, depending on the signs of the original linking numbers and of the blow downs) and so has $\ol{\mu}_{012}(J) = \pm 1$.  Even if a clasp is changed so that two of the components have linking number $2$, the length 3 Milnor invariant is still well-defined mod~2.

\begin{figure}[h]
\includegraphics[width=6cm]{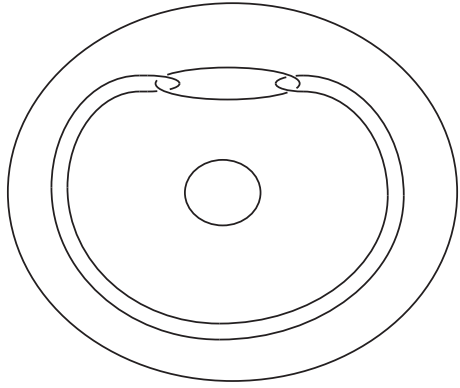}
\caption{The link $J$ obtained by blowing down two of the components of the link of Figure~\ref{figure:nmlink2}.  With the meridian of the solid torus, this is the Borromean rings with a clasp changed.}
\label{figure:nmlink3}
\end{figure}

\begin{prop}
\label{P:nmlink}
For an $(n,m)$-link $L$ the disc replicating functions $D_L$ is given by
$$D_L(k)=\max\{\lceil \tfrac{2mk}{n}\rceil -1,0\}.$$
\end{prop}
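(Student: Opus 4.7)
The plan is to prove the equality by establishing matching inequalities $D_L(k) \geq \lceil 2mk/n \rceil - 1$ and $D_L(k) \leq \lceil 2mk/n \rceil - 1$ for $k \geq 1$; the case $k = 0$ is covered by the convention $D_L(0) = 0$.

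For the lower bound we apply \cref{theorem:disc-replicating-functions} to the auxiliary link $J$ constructed in the paragraphs immediately preceding the proposition. For $n \geq 2$, taking the $m$-fold branched cover of $S^3$ along $L_0$ (so $d = m$), extracting the $(n, 1)$-sublink $\widehat J$ of the preimage, and blowing down the $n - 2$ chain components of $\widehat J$ (each an unknot in a $3$-ball, as noted) produces the $3$-component link $J$, isotopic to the Borromean rings (or a variant with a single clasp changed, still satisfying $\bar\mu_{012}(J) \not\equiv 0 \bmod 2$). Plugging $d = m$, $\ell = n - 2$, $|I| = 3$, and the component count of $J \setminus J_0$ equal to $2$ into the formula of \cref{theorem:disc-replicating-functions} delivers $f_L^J(k) = \lceil 2mk/(2 + n - 2)\rceil - 1 = \lceil 2mk/n\rceil - 1$. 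The residual case $n = 1$ uses $\widehat J = J = $ Whitehead link with no blow-downs, together with the length-$4$ Milnor invariant $\bar\mu_{0011}(J) = \pm 1$; the same formula then yields $f_L^J(k) = 2mk - 1$, matching the claim.

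For the upper bound we exhibit, given a meridional $k$-interlacing $A, B$ of $T_0$, an explicit ambient isotopy of $L$ after which every chain component has interlacing at most $\lceil 2mk/n\rceil - 1$. We will work in the $m$-fold cyclic cover $\pi: \wt T_0 \to T_0$, where $L \setminus L_0$ lifts to $m$ disjoint $(n, 1)$-chains and $A \cup B$ lifts to an $(mk)$-interlacing $\wt A \cup \wt B$ of $2mk$ evenly spaced meridional discs. Isotope one lifted $(n, 1)$-chain so its $n$ unknots partition $\wt T_0$'s longitudinal direction into $n$ consecutive wedges of width $m/n$, with each unknot realized as a Bing-style clasp that traverses its wedge ``up and back'' longitudinally and terminates in clasps at its endpoints with its two neighbours (or with itself when $n = 1$); propagate to the remaining $m - 1$ chains via the $\mathbb{Z}/m$ deck action. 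Setting $p := \lceil 2mk/n\rceil$, each chain unknot's boundary torus then meets $\wt A \cup \wt B$ in exactly $2p$ meridional discs whose cyclic order realizes the nested pattern $D_1 D_2 \ldots D_p D_p D_{p-1} \ldots D_1$, with the labels of $D_1, \ldots, D_p$ alternating in $A$ and $B$. Since each chain component $L_i$ lifts to $m$ disjoint components in $\wt T_0$, the restriction of $\pi$ to $\pi^{-1}(\partial \nu L_i) \to \partial \nu L_i$ is a trivial $m$-fold cover, so the cyclic boundary pattern of $L_i$ in $T_0$ coincides with that on any of its lifts.

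The main technical point will be verifying that the nested cyclic word $D_1 \ldots D_p D_p \ldots D_1$ (labels alternating along the initial half) has interlacing equal to $p - 1$, not $p$: the two turnaround repeats $D_p D_p$ and $D_1 D_1$ each destroy one alternating $A$-$B$ pair in cyclic order, and a short case analysis on the parity of $p$ confirms that the maximal alternating sub-pattern has length exactly $2(p - 1)$. Given this combinatorial fact together with the explicit clasp positioning, the upper and lower bounds combine to finish the proof.
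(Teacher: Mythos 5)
Your proposal is correct and follows essentially the same route as the paper: the lower bound is exactly the paper's argument (the $m$-fold cover, $n-2$ blow-downs to the Borromean rings with $\ol{\mu}_{012}=\pm 1$ for $n\geq 2$, and the Whitehead link with $\ol{\mu}_{0011}=\pm 1$ for $n=1$, fed into \cref{theorem:disc-replicating-functions}), and the upper bound is the same explicit isotopy the paper indicates via Figure~\ref{figure:nmlink4}. Your version of the upper bound is in fact more detailed than the paper's, since you make the wedge decomposition in the $m$-fold cover and the combinatorics of the nested cyclic pattern $D_1\cdots D_pD_p\cdots D_1$ (which indeed has exactly $2(p-1)$ label changes, hence interlacing $p-1$) explicit rather than appealing to a picture.
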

\begin{proof}
For $n >1$, to see that $D_L(k)\geq \max\{\lceil \tfrac{2mk}{n}\rceil -1,0\}$ construct $J$ by taking an $m$-fold cover, and as described above blow down $n-2$ components.  This leaves $2$ components inside the solid torus, and the meridian of the solid torus as $J_0$. By \cref{theorem:disc-replicating-functions} we have $f_L^J(k)=\max\{\lceil \tfrac{2mk}{n}\rceil -1,0\}$.  If $n=1$, then a sublink $J$ of the $m$-fold cover is the Whitehead link.  The Whitehead link has $\ol{\mu}_{0011}(J) = \pm 1$, so we obtain the lower disc replicating function $f_L^J(k) = 2mk -1$.  This proves the lower bound.

As shown in an example in Figure~\ref{figure:nmlink4} it is not too hard to isotope the link so that every component inherits at most a $\big(\lceil \tfrac{2mk}{n}\rceil -1\big)$-interlacing.  Since we can arrange that every intersection of the regular neighbourhood of the link with the meridional $k$-interlacing is again a meridional disc, this is an upper bound for $D_L(k)$.  This completes the proof.
\end{proof}

\begin{figure}[h]
{\psfrag{A}{$A$}
\psfrag{B}{$B$}
\includegraphics[width=9cm]{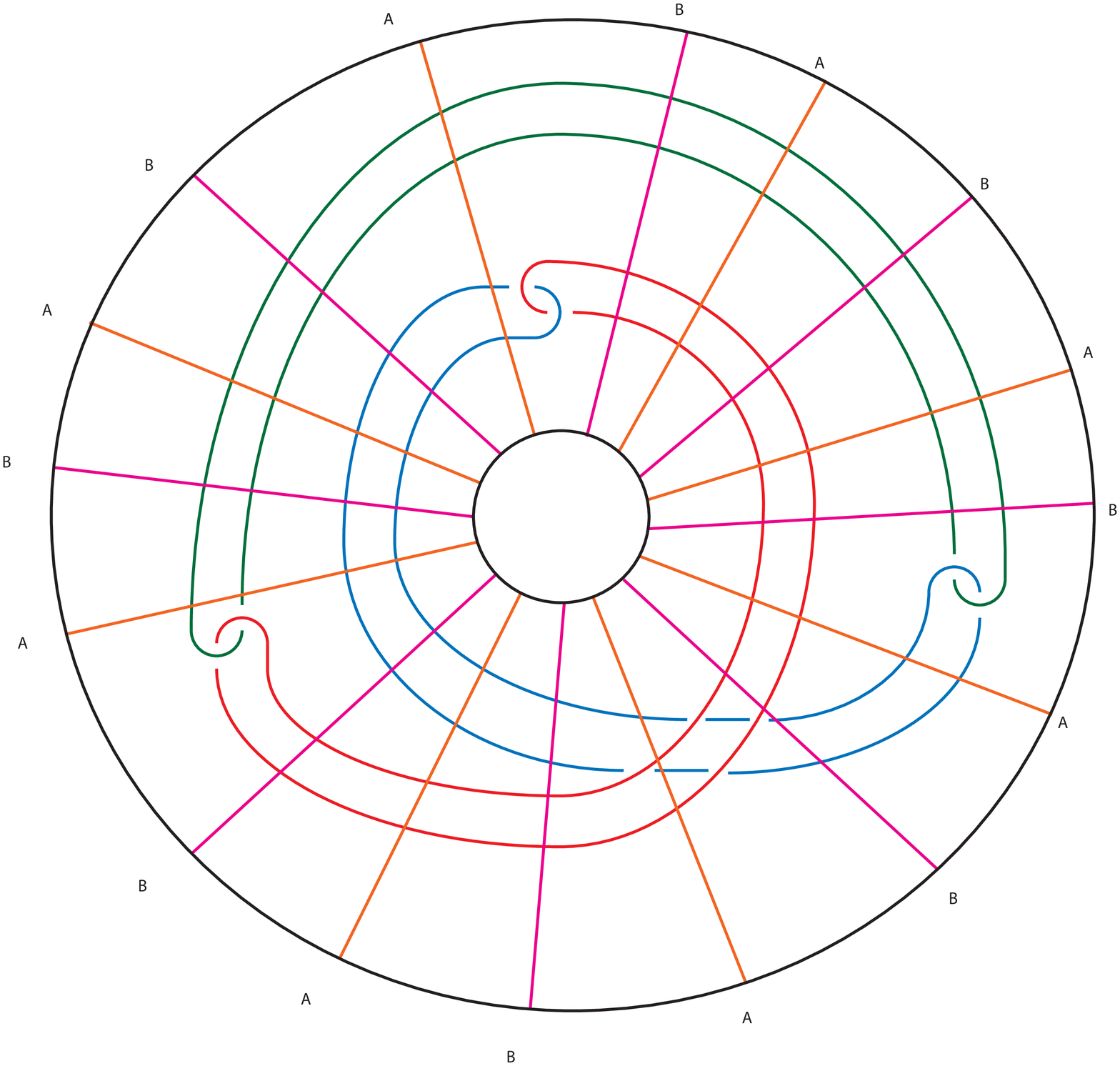}}
\caption{A $(3,2)$-link with an $8$-interlacing.  In this case $D_L(8)=10$.}
\label{figure:nmlink4}
\end{figure}

Combining \cref{thmA} and \cref{P:nmlink} we get the following criteria for shrinking or nonshrinking of a decomposition arising from a sequence of $(n_i,m_i)$-links.  The answer is particulary nice when the sequence of links is periodic, and still quite nice when $\sup_{i\in\bbN} n_{i}<\infty$.

\begin{cor}
\label{C:nmlinks}
Let $L^i$ be an $(n_i,m_i)$-link. Define $\tau_i:=\frac{n_{i}}{2m_{i}}$.
\begin{enumerate}
\item \label{item:cor-5-point-2-item-1} If $\sum_{j=1}^\infty\prod_{i=1}^j\tau_i$ converges, then the decomposition $\mcD$ does not shrink.
\item \label{item:cor-5-point-2-item-2} If $\sum_{j=1}^\infty\tfrac{1}{n_j}\prod_{i=1}^j\tau_i$ diverges, then $\mcD$ does shrink.
\end{enumerate}
In particular we have:
\begin{enumerate}
\setcounter{enumi}{2}
\item \label{item:as} If $\sup_{i\in\bbN} n_{i}<\infty$, then $\mcD$ shrinks if and only if $\sum_{j=1}^\infty\prod_{i=1}^j\tau_i$ diverges.
\item \label{item:cor-5-point-2-item-4} If the sequence of links is periodic; that is if there exists $p\in\bbN$ with $L^i=L^{i+p}$ for all $i\in\bbN$, then $\mcD$ shrinks if and only if $\prod_{i=1}^p\tau_i\geq 1$.
\end{enumerate}
\end{cor}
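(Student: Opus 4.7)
The strategy is to feed Theorem~\ref{thmA} the explicit formula $D_{L^i}(k) = \max\{\lceil k/\tau_i\rceil - 1, 0\}$ of Proposition~\ref{P:nmlink}, and to analyse the iterated composition $b_s := (D_{L^s}\circ\cdots\circ D_{L^m})(k)$. The engine of the argument is the pair of pointwise inequalities
\[
\frac{k}{\tau_i} - 1 \;\leq\; D_{L^i}(k) \;\leq\; \frac{k}{\tau_i} - \frac{1}{n_i},
\]
valid for all integers $k\geq 1$. The lower bound is immediate from $\lceil x\rceil \geq x$. The upper bound needs a moment's thought: writing $k/\tau_i = 2m_ik/n_i$, this rational has denominator dividing $n_i$, so either it is an integer (forcing $D_{L^i}(k) = k/\tau_i - 1 \leq k/\tau_i - 1/n_i$) or its fractional part is at least $1/n_i$ (so $D_{L^i}(k) = \lfloor k/\tau_i\rfloor \leq k/\tau_i - 1/n_i$).

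A straightforward induction on $s$ based on these two inequalities yields the sandwich
\[
k - \sum_{\ell=m}^{s}\tau_m\cdots\tau_\ell \;\leq\; b_s\,(\tau_m\cdots\tau_s) \;\leq\; k - \sum_{\ell=m}^{s}\frac{\tau_m\cdots\tau_\ell}{n_\ell},
\]
where the upper half is valid as long as $b_m,\ldots,b_{s-1}\geq 1$; if some intermediate $b$ already vanishes then $b_t = 0$ for every later $t$, and that is all we need for Theorem~\ref{thmA}. The tail series $\sum_{\ell\geq m}\tau_m\cdots\tau_\ell$ and $\sum_{\ell\geq m}\tau_m\cdots\tau_\ell/n_\ell$ differ from the series in the statement only by the positive multiplicative constant $\prod_{i=1}^{m-1}\tau_i$, so shifting the starting index does not affect convergence.

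For part~(1), convergence of $\sum_j\prod_{i=1}^j\tau_i$ makes the tail $\sum_{\ell\geq m}\tau_m\cdots\tau_\ell$ finite; choosing $k$ strictly larger than this tail keeps the left-hand side of the sandwich positive for every $s$, so $b_s$ is a positive integer for every $s$, and Theorem~\ref{thmA} rules out shrinking. For part~(2), divergence of $\sum_j\tfrac{1}{n_j}\prod_{i=1}^j\tau_i$ forces the right-hand side to become non-positive for some $s$, so $b_s = 0$ there; since this argument applies to every $k$ and $m$, the criterion of Theorem~\ref{thmA} gives shrinking of $\mcD$. Part~(3) is immediate from (1) and (2), since $\sup n_i<\infty$ makes the two defining series converge or diverge together. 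Part~(4) specialises (3): periodicity yields $\prod_{i=1}^{jp}\tau_i = T^j$ with $T = \prod_{i=1}^p\tau_i$, so $\sum_j\prod_{i=1}^j\tau_i$ diverges precisely when $T\geq 1$.

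The main obstacle is the refined upper bound $D_{L^i}(k) \leq k/\tau_i - 1/n_i$; the naive estimate $D_{L^i}(k)\leq k/\tau_i$ suffices for parts (1), (3), and (4), but not for the shrinking direction in (2), where the factor $1/n_j$ in the defining series is captured precisely by this refinement, which in turn stems from the bounded-denominator observation about $2m_ik/n_i$.
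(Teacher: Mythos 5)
Your proposal is correct and follows essentially the same route as the paper: it feeds the formula of Proposition~\ref{P:nmlink} into Theorem~\ref{thmA}, bounds $D_{L^i}$ below by $k\mapsto \tau_i^{-1}k-1$ for part~(1) and above by $k\mapsto \tau_i^{-1}k-\tfrac{1}{n_i}$ for part~(2), and telescopes the composition exactly as in the paper's displayed computations, with (3) and (4) deduced identically. The only (welcome) difference is presentational: you justify the refined upper bound via the bounded-denominator observation about $2m_ik/n_i$ and state the sandwich inequality explicitly, whereas the paper asserts $D_{L^i}(k)\leq\max\{g_i(k),0\}$ without comment.
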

\begin{proof}~
\begin{enumerate}
\item  Assume that $\sum_{j=1}^\infty\prod_{i=1}^j\tau_i$ converges. Let $g_i\colon \Q \to \Q$ be defined by $g_i(k)=\tau_i^{-1}k-1$ and choose $k_0>\sum_{j=1}^\infty\prod_{i=1}^j\tau_i$.  Such an integer $k_0$ exists since we assume that the right hand side converges.
Then for all $r \ge 1$ we compute that
\begin{eqnarray*}
g_{r}\circ\ldots\circ g_{1}(k_0)&=&\bigg(\prod_{i=1}^r\tau_i^{-1}\bigg)k_0-\sum_{j=1}^r\prod_{i=j+1}^r\tau_i^{-1}\\
&=&\bigg(\prod_{i=1}^r\tau_i^{-1}\bigg)\bigg(k_0-\sum_{j=1}^r\prod_{i=1}^j\tau_i\bigg)\\
&>&0.
\end{eqnarray*}
By \cref{P:nmlink}, for all $k>0$,
and in particular for $k=k_0$, we have that $D_{L^i}(k)=\lceil \tau_i^{-1}k\rceil-1\geq g_i(k)$, so by \cref{thmA} the above inequality therefore implies that $\mcD$ does not shrink.
\item For any fixed $k_0,m\in\bbN$, let $r$ be such that $$\sum_{j=m}^{r+m}\frac{1}{n_{j}}\prod_{i=1}^j\tau_i>\bigg(\prod_{i=1}^{m-1}\tau_i\bigg)k_0.$$
The right hand side is now a fixed integer.  Since we are assuming that the series $\sum_{j=1}^\infty\tfrac{1}{n_j}\prod_{i=1}^j\tau_i$ diverges, so does the sequence $\sum_{j=m}^\infty\tfrac{1}{n_j}\prod_{i=1}^j\tau_i$.  Therefore there exists a partial sum larger than any given integer.

Now define $g_i(k):=\tau_i^{-1}k-\tfrac{1}{n_{i}}$. Then for all $r \ge 0$ we compute that:
\begin{eqnarray*}
g_{r+m}\circ\ldots\circ g_{m}(k_0)&=&\bigg(\prod_{i=m}^{r+m}\tau_i^{-1}\bigg)k_0-\sum_{j=m}^{r+m}\bigg(\tfrac{1}{n_{j}}\prod_{i=j+1}^{r+m}\tau_i^{-1}\bigg)\\
&=&\bigg(\prod_{i=1}^{r+m}\tau_i^{-1}\bigg)\bigg(\bigg(\prod_{i=1}^{m-1}\tau_i^{-1}\bigg)k_0-\sum_{j=m}^{r+m}\bigg(\tfrac{1}{n_{j}}\prod_{i=1}^j\tau_i\bigg)\bigg)\\
&<&0.
\end{eqnarray*}
By \cref{P:nmlink}, for all $k>0$ we have $D_{L^i}(k)=\lceil \tau_i^{-1}k\rceil-1\leq \max\{g_i(k),0\}$, so by \cref{thmA} the above inequality therefore implies that $\mcD$ does shrink.
\item Let $B:=\sup_{j\in\bbN}n_{j}$, then $$\sum_{j=1}^\infty\tfrac{1}{n_{j}}\prod_{i=1}^j\tau_i > \frac{1}{B}\sum_{j=1}^\infty\prod_{i=1}^j\tau_i$$
and therefore $\sum_{j=1}^\infty\tfrac{1}{n_{i}}\prod_{i=1}^j\tau_i$ diverges if $\sum_{j=1}^\infty\prod_{i=1}^j\tau_i$ diverges, so $\mathcal{D}$ shrinks by (\ref{item:cor-5-point-2-item-2}).  The only if direction is immediate.
\item In this case $\sup_{i\in\bbN}n_{i}<\infty$ and so (\ref{item:as}) applies.  Since $\tau_i$ is also periodic with period $p$, we have that:
$$\sum_{j=1}^\infty\prod_{i=1}^j\tau_i=\bigg(\sum_{j=1}^p\prod_{i=1}^j\tau_i\bigg)\sum_{j=0}^\infty\bigg(\prod_{i=1}^{p}\tau_i\bigg)^j$$
and $\sum_{j=0}^\infty\left(\prod_{i=1}^{p}\tau_i\right)^j$ diverges if and only if $\prod_{i=1}^{p}\tau_i\geq 1$.
\end{enumerate}
\end{proof}

\subsection{The results of Ancel and Starbird}
Let $w_0<w_1<w_2<\ldots$ be a sequence of positive integers.
Let $L^{w_i}$ be the Whitehead link and let $L^j$ be the Borromean rings for $j\notin\{w_i\mid i\in\bbN\}$. Then the associated links in the solid torus are the Bing and Whitehead doubles of the unknot respectively. Let $\mcD$ be the associated decomposition. By \cref{P:nmlink} a disc replicating function for the Borromean rings is $D_1(k) = k-1$ and a disc replicating function for the Whitehead link is $D_2(k) = 2k-1$. These agree with the formulae in Lemma~4 and Lemma~5 of \cite{Ancel-Starbird-1989}.

Let $c_1:=w_1-1$ and $c_i:=w_i-w_{i-1}-1$ for $i>1$. These denote the number of Bing/Borromean links in between successive Whitehead links.

The following theorem was proved by F.~Ancel and M.~Starbird~\cite{Ancel-Starbird-1989}, and also later by D.~Wright~\cite{Wright-Bing-Whitehead-1989}.

\begin{thm}[Ancel, Starbird]\label{theorem:ancel-starbird}
The decomposition $\mcD$ shrinks if and only if   $\sum_{i=1}^{\infty} \tfrac{c_i}{2^i}$ diverges.
\end{thm}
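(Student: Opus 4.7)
The plan is to derive the theorem directly from Corollary~\ref{C:nmlinks}~(\ref{item:as-intro}). Under the hypotheses we have $n_i \in \{1,2\}$ for every $i$, so $\sup_{i\in\bbN} n_i = 2 < \infty$ and the dichotomy of Corollary~\ref{C:nmlinks}~(\ref{item:as-intro}) applies: the decomposition $\mcD$ shrinks if and only if $\sum_{j=1}^\infty \prod_{i=1}^j \tau_i$ diverges. So the task reduces to rewriting this series in terms of the block-length data $c_i$.

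First I would compute the ratios $\tau_i = n_i/(2m_i)$. The Borromean rings form a $(2,1)$-link, giving $\tau_i = 1$, while the Whitehead link is a $(1,1)$-link, giving $\tau_i = 1/2$. Consequently, if $a_j$ denotes the number of Whitehead stages among $L^1,\dots,L^j$, then
\[
\prod_{i=1}^{j} \tau_i = \bigl(\tfrac{1}{2}\bigr)^{a_j}\cdot 1^{\,j-a_j} = 2^{-a_j}.
\]
So $\mcD$ shrinks if and only if $\sum_{j=1}^\infty 2^{-a_j}$ diverges.

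Next I would group the indices $j$ according to the blocks between successive Whitehead stages. Setting $w_0 := 0$, for indices $j$ with $w_{i-1} < j < w_i$ the link $L^j$ is a Borromean rings so $a_j = i-1$, while for $j = w_i$ we have $a_j = i$. Since there are exactly $c_i = w_i - w_{i-1} - 1$ Borromean indices in the $i$th block, summing over blocks gives
\[
\sum_{j=1}^\infty 2^{-a_j} = \sum_{i=1}^\infty \Bigl( c_i \cdot 2^{-(i-1)} + 2^{-i} \Bigr) = 2\sum_{i=1}^\infty \frac{c_i}{2^i} + \sum_{i=1}^\infty 2^{-i} = 2\sum_{i=1}^\infty \frac{c_i}{2^i} + 1.
\]
Since the geometric tail is bounded, the series $\sum 2^{-a_j}$ diverges if and only if $\sum c_i / 2^i$ diverges, which combined with the reduction from Corollary~\ref{C:nmlinks}~(\ref{item:as-intro}) proves the theorem.

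There is essentially no obstacle beyond bookkeeping: the only thing to check carefully is that the disc replicating functions we use are the ones coming from our Proposition~\ref{P:nmlink}, which for the $(2,1)$-link (Borromean) and the $(1,1)$-link (Whitehead) reduce to $D_L(k)=k-1$ and $D_L(k)=2k-1$ respectively, matching the formulas of \cite[Lemmas~4~and~5]{Ancel-Starbird-1989}, so applying Corollary~\ref{C:nmlinks} is legitimate. Everything else is an elementary regrouping of a nonnegative series.
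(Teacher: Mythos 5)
Your proof is correct and takes essentially the same route as the paper: both apply Corollary~\ref{C:nmlinks}~(\ref{item:as}) with $\tau_i=1$ at Borromean stages and $\tau_i=\tfrac12$ at Whitehead stages, then regroup the series $\sum_j\prod_{i\le j}\tau_i$ in terms of the $c_i$. Your bookkeeping is in fact slightly more careful than the paper's (which writes the sum as $2\sum_j c_j/2^j$, omitting the harmless convergent contribution $\sum_i 2^{-i}=1$ from the Whitehead indices), but this makes no difference to the divergence criterion.
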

The proof follows easily from \cref{C:nmlinks}.
\begin{proof}
Let $\tau_{w_i}=\tfrac{1}{2}$ and let $\tau_\ell=1$ for $\ell\notin\{w_i\mid i\in\bbN\}$, as is consistent with the definition of the $\tau_j$ in \cref{C:nmlinks}. By \cref{C:nmlinks}~(\ref{item:as}) the decomposition $\mcD$ shrinks if and only if $\sum_{j=1}^\infty\prod_{i=1}^j\tau_i= 2 \sum_{j=1}^\infty\tfrac{c_j}{2^j}$ diverges.
\end{proof}

\subsection{The results of Sher and Armentrout}
Sher's theorem~\cite[Theorem~4]{Sher-67} was generalised by Armentrout~\cite[Theorem~1]{Armentrout-70} as follows.  Let $L^{i}$ be an $(n_i,m_i)$-link for $i \in \N$. As always let $\mathcal{D}$ denote the associated decomposition space of $S^3$.

\begin{thm}[Armentrout, Sher]\label{theorem:armentrout-sher}
Suppose that $n_i < 2m_i$ for all $i$.  Then $\mathcal{D}$ does not shrink.
\end{thm}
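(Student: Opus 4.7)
My plan is to apply \cref{thmA} directly, using the explicit formula for the disc replicating function of an $(n,m)$-link provided by \cref{P:nmlink}. The hypothesis $n_i < 2m_i$ translates, in terms of $\tau_i := n_i/(2m_i)$, to the statement that $\tau_i$ is strictly less than $1$ for every $i$. My first observation is that this forces each $D_{L^i}$ to be nondecreasing on positive integers in the following strong sense: by \cref{P:nmlink}, for $k \geq 1$ we have
\[
D_{L^i}(k) \;=\; \left\lceil \tfrac{2m_i k}{n_i} \right\rceil - 1,
\]
and since $2m_i/n_i > 1$ we get $2m_i k/n_i > k$, so the ceiling is at least $k+1$. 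Hence $D_{L^i}(k) \geq k$ for all $k \in \N$ and all $i \in \N$.

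Feeding this into the failure criterion of \cref{thmA}, I would fix any $m \in \N$ and take $k=1$. A trivial induction on $p$ then gives $(D_{L^{m+p}} \circ \cdots \circ D_{L^m})(1) \geq 1$ for every $p \geq 0$, so this sequence certainly does not tend to zero. By \cref{thmA}, the decomposition $\mathcal{D}$ does not shrink.

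There is essentially no obstacle here once \cref{P:nmlink} is available; the only thing worth flagging is that one cannot shortcut through \cref{C:nmlinks}\,(\ref{item:cor-5-point-2-item-1-intro}), because $\tau_i < 1$ alone is not enough to force $\sum_j \prod_{i\leq j} \tau_i$ to converge (if $m_i$ grows rapidly then $\tau_i \to 1$ fast enough that $\prod_i \tau_i$ can stay bounded away from zero, making $\sum_j \prod_{i\leq j} \tau_i$ divergent). Thus the direct use of the characterisation in \cref{thmA}, rather than any of the summation criteria in \cref{C:nmlinks}, is what is needed.
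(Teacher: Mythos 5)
Your proof is correct and follows essentially the same route as the paper: both apply \cref{P:nmlink} to get $D_{L^i}(k)=\lceil 2m_ik/n_i\rceil-1\geq k$ from $n_i<2m_i$, and then conclude via \cref{thmA} that the composed sequence never reaches zero. Your closing remark about why \cref{C:nmlinks}~(\ref{item:cor-5-point-2-item-1-intro}) cannot be used as a shortcut is accurate but not needed for the argument.
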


\begin{proof}
By \cref{P:nmlink} the disc replicating function $D_{L^i}$ of $L^i$ is given by
\[D_{L^i}(k)=\left\lceil\tfrac{2m_i k}{n_i}\right\rceil-1\]
Since $n_i < 2m_i$ it follows that $\lceil\tfrac{2m_i k}{n_i}\rceil > k$ and therefore $D_{L^i}(k)\geq k$. By \cref{thmA} this implies that $\mcD$ does not shrink.
\end{proof}

\subsection{More examples of $(n,m)$-links}
In this subsection we give two examples of mixed decomposition of $(n,m)$-links for which neither criterion of \cref{C:nmlinks} applies. The first does not shrink, the second does. This shows that \cref{C:nmlinks}~(\ref{item:cor-5-point-2-item-1}) and (\ref{item:cor-5-point-2-item-2}) are not sharp.  In these examples both $n_i$ and $m_i$ tend to infinity as $i$ tends to infinity.

\begin{example}
For our first example, let $L^i$ be a $(2i,i+1)$-link. Then $\tau_i=\tfrac{i}{i+1}$ and we have $$\sum_{j=1}^\infty\prod_{i=1}^j\tau_i=\sum_{j=1}^\infty\frac{1}{j+1}=\infty$$
and
$$\sum_{j=1}^\infty\frac{1}{j+1}\prod_{i=1}^j\tau_i=\sum_{j=1}^\infty\frac{1}{(j+1)^2}<\infty.$$
Thus none of the conditions from \cref{C:nmlinks} are satisfied.  However since $\tau_i<1$ for all $i$ we know from the theorem of Sher and Armentrout (\cref{theorem:armentrout-sher}) that $\mcD$ does not shrink.
\end{example}

\begin{example}
For our second example, let $L^{2s}$ be a $(2s^2,1)$-link and let $L^{2s+1}$ be a $(2,(s+1)^2)$-link. Then $\tau_{2s}=s^2, \tau_{2s+1}=\frac{1}{(s+1)^2}, n_{2s}=2s^2$ and $n_{2s+1}=1$. Therefore,
$$\sum_{j=1}^\infty\prod_{i=1}^j\tau_i=\sum_{j=1}^\infty\left(1+\tfrac{1}{j^2}\right)=\infty$$
and
$$\sum_{j=1}^\infty\tfrac{1}{n_j}\prod_{i=1}^j\tau_i=\sum_{j=1}^\infty\frac{3}{2j^2}<\infty.$$
So once again none of the conditions from \cref{C:nmlinks} are satisfied.  This time, by \cref{P:nmlink} we have $D_{2s}(k)=\lceil \tfrac{k}{s^2}\rceil-1$ and $D_{2s+1}(k)=(s+1)^2k-1$. Therefore, $$D_{2s+2}(D_{2s+1}(k))=\lceil k-\tfrac{1}{(s+1)^2}\rceil -1=k-1$$
and so by \cref{thmA} we see that $\mcD$ does shrink.
\end{example}

\bibliographystyle{annotate}
\bibliography{markbib1}
\end{document}